\documentclass[reqno,11pt]{amsart}
\usepackage{amsmath,amsthm,amssymb,amsfonts}
\usepackage{epsfig,color,graphicx,enumerate}
\usepackage[notcite,notref]{showkeys}
\usepackage{accents}
\newcommand*{\dt}[1]{%
\accentset{\mbox{\large\bfseries .}}{#1}}

\numberwithin{equation}{section}
\theoremstyle{plain}
\newtheorem{teo}{Theorem}[section]
\newtheorem{lemma}[teo]{Lemma}
\newtheorem{cor}[teo]{Corollary}

\newtheorem{prop}[teo]{Proposition}
\newtheorem{deff}[teo]{Definition}
\theoremstyle{remark}
\newtheorem{oss}[teo]{Remark}
\newtheorem{exam}[teo]{Example}


\voffset=-1.5cm \textheight=23cm \hoffset=-.5cm \textwidth=16cm
\oddsidemargin=1cm \evensidemargin=-.1cm
\footskip=35pt \linespread{1.10}
\parindent=20pt


\def\E{\mathcal E}

\def\M{\mathcal M}

\def\V{\mathcal V}

\def\N{\mathbb N}
\def\R{\mathbb R}

\def\O{\Omega}
\def\eps{\varepsilon}

\def\bal{\begin{aligned}}
\def\eal{\end{aligned}}
\def\ds{\displaystyle}

\newcommand{\be}{\begin{equation}}
\newcommand{\ee}{\end{equation}}
\newcommand{\bib}[4]{\bibitem{#1}{\sc#2: }{\it#3. }{#4.}}

\newcommand{\cp}{\mathop{\rm cap}\nolimits}

\title[Optimal Potentials for Schr\"odinger Operators]{Optimal Potentials for Schr\"odinger Operators}

\author{G. Buttazzo}
\address{Dipartimento di Matematica, Universit\`a di Pisa, Largo B. Pontecorvo 5, 56126 Pisa, ITALY}
\email{buttazzo@dm.unipi.it}

\author{A. Gerolin}
\address{Dipartimento di Matematica, Universit\`a di Pisa, Largo B. Pontecorvo 5, 56126 Pisa, ITALY}
\email{gerolin@mail.dm.unipi.it }

\author{B. Ruffini}
\address{Scuola Normale Superiore di Pisa, Piazza dei Cavalieri 7, 56126 Pisa, ITALY}
\email{berardo.ruffini@sns.it}

\author{B. Velichkov}
\address{Scuola Normale Superiore di Pisa, Piazza dei Cavalieri 7, 56126 Pisa, ITALY}
\email{b.velichkov@sns.it}

\begin{document}

\maketitle

\begin{abstract}
We consider the Schr\"odinger operator $-\Delta+V(x)$ on $H^1_0(\O)$, where $\O$ is a given domain of $\R^d$. Our goal is to study some optimization problems where an optimal potential $V\ge0$ has to be determined in some suitable admissible classes and for some suitable optimization criteria, like the energy or the Dirichlet eigenvalues.
\end{abstract}
\medskip
\textbf{Keywords:} Schr\"odinger operators, optimal potentials, spectral optimization, capacity.

\textbf{2010 Mathematics Subject Classification:} 49J45, 35J10, 49R05, 35P15, 35J05.

\section{Introduction}\label{sintro}

In this paper we consider the Schr\"odinger operator $-\Delta+V(x)$ on $H^1_0(\O)$, where $\O$ is a given domain of $\R^d$. Our goal is to study some optimization problems where an optimal potential $V\ge0$ has to be determined, for some suitable optimization criteria, among the ones belonging to some admissible classes. The problems we are dealing with are then
$$\min\big\{F(V)\ :\ V\in\V\big\},$$
where $F$ denotes the cost functional and $\V$ the admissible class. The cost functionals we aim to include in our framework are for instance the following.

\medskip{\it Integral functionals.} Given a right-hand side $f\in L^2(\O)$ we consider the solution $u_V$ of the elliptic PDE
$$-\Delta u+Vu=f\hbox{ in }\O,\qquad u\in H^1_0(\O).$$
The integral cost functionals we may consider are of the form
$$F(V)=\int_\O j\big(x,u_V(x),\nabla u_V(x)\big)\,dx,$$
where $j$ is a suitable integrand that we assume convex in the gradient variable and bounded from below. One may take, for example, 
$$j(x,s,z)\ge-a(x)-c|s|^2,$$
with $a\in L^1(\O)$ and $c$ smaller than the first Dirichlet eigenvalue of the Laplace operator $-\Delta$ in $\O$. In particular, the energy $\E_f(V)$ defined by
\be\label{energy}
\E_f(V)=\inf\left\{\int_\O\Big(\frac12|\nabla u|^2+\frac12V(x)u^2-f(x)u\Big)\,dx\ :\ u\in H^1_0(\O)\right\},
\ee
belongs to this class since, integrating by parts its Euler-Lagrange equation, we have
$$\E_f(V)=-\frac12\int_\O f(x)u_V\,dx,$$
which corresponds to the integral functional above with
$$j(x,s,z)=-\frac12 f(x)s.$$

\medskip{\it Spectral functionals.} For every admissible potential $V\ge0$ we consider the spectrum $\Lambda(V)$ of the Schr\"odinger operator $-\Delta+V(x)$ on $H^1_0(\O)$. If $\O$ is bounded or has finite measure, or if the potential $V$ satisfies some suitable integral properties, the operator $-\Delta+V(x)$ has a compact resolvent and so its spectrum $\Lambda(V)$ is discrete:
$$\Lambda(V)=\big(\lambda_1(V),\lambda_2(V),\dots\big),$$
where $\lambda_k(V)$ are the eigenvalues counted with their multiplicity. The spectral cost functionals we may consider are of the form
$$F(V)=\Phi\big(\Lambda(V)\big),$$
for a suitable function $\Phi:\R^\N\to\overline{\R}$. For instance, taking $\Phi(\Lambda)=\lambda_k$ we obtain
$$F(V)=\lambda_k(V).$$

\medskip

Concerning the admissible classes we deal with, we consider mainly the cases
$$\V=\left\{V\ge0\ :\ \int_\O V^p\,dx\le1\right\}
\qquad\hbox{and}\qquad
\V=\left\{V\ge0\ :\ \int_\O V^{-p}\,dx\le1\right\};$$
in some situations more general admissible classes $\V$ will be considered, see Theorem \ref{main} and Theorem \ref{mainPhi}.

In Section \ref{s31} our assumptions allow to take $F(V)=-\E_f(V)$ and thus the optimization problem becomes the maximization of $\E_f$ under the constraint $\int_\O V^p\,dx\le1$. We prove that for $p\ge1$, there exists an optimal potential for the problem
\be
\max\left\{\E_f(V)\ :\ \int_\O V^p\,dx\le1\right\}.
\ee 
The existence result is sharp in the sense that for $p<1$ the maximum cannot be achieved (see Remark \ref{controesempio}). For the existence issue in the case of a bounded domain, we follow the ideas of Egnell \cite{egnell}, summarized in \cite[Chapter 8]{hen06} (where a complete reference for the problem can also be found). The case $p=1$ is particularly interesting and we show that in this case the optimal potentials are of the form
$$V_{opt}=\frac{f}{M}\left(\chi_{\omega_+}-\chi_{\omega_-}\right),$$
where $\chi_U$ indicates the characteristic function of the set $U$, $f\in L^2(\O)$, $M=\|u_V\|_{L^\infty(\O)}$, and $\omega_\pm=\{u=\pm M\}$. In Section \ref{s4} we deal with minimization problems of the form
\be\label{1.3}
\min\big\{F(V)\ :\ \int_\O V^{-p}\,dx\le1\big\}.
\ee
We prove a general result (Theorem \ref{mainPhi}) establishing the existence of an optimal potential under some mild conditions on the functional $F$. In particular, we obtain the existence of optimal potentials for a large class of spectral and energy functionals (see Corollary \ref{lb}).

In Section \ref{s5} we deal with the case of unbounded domains $\O$. precisely, we prove that in the case $\O=\R^d$ and $F=\E_f$ or $F=\lambda_1$, the solutions of problem \eqref{1.3} exist and are such that $1/V$ is compactly supported, provided $f$ is compactly supported. Finally, in Section \ref{s6} we make some further remarks and present some open questions.

\section{Capacitary measures and $\gamma$-convergence}\label{s2}

For a subset $E\subset\R^d$ its {\it capacity} is defined by
$$\cp(E)=\inf\left\{\int_{\R^d}|\nabla u|^2\,dx+\int_{\R^d}u^2\,dx\ :\ u\in H^1(\R^d),\ u\ge 1\ \hbox{in a neighborhood of } E\right\}.$$
If a property $P(x)$ holds for all $x\in\O$, except for the elements of a set $E\subset\O$ of capacity zero, we say that $P(x)$ holds {\it quasi-everywhere} (shortly {\it q.e.}) in $\O$, whereas the expression {\it almost everywhere} (shortly {\it a.e.}) refers, as usual, to the Lebesgue measure, which we often denote by $|\cdot|$. 

A subset $A$ of $\R^d$ is said to be {\it quasi-open} if for every $\eps>0$ there exists an open subset $A_\eps$ of $\R^d$, with $A\subset A_\eps$, such that $\cp(A_\eps\setminus A)<\eps$. Similarly, a function $u:\R^d\to\R$ is said to be {\it quasi-continuous} (respectively {\it quasi-lower semicontinuous}) if there exists a decreasing sequence of open sets $(A_n)_n$ such that $\cp(A_n)\to0$ and the restriction $u_n$ of $u$ to the set $A_n^c$ is continuous (respectively lower semicontinuous). It is well known (see for instance \cite{evgar}) that every function $u\in H^1(\R^d)$ has a quasi-continuous representative $\widetilde u$, which is uniquely defined up to a set of capacity zero, and given by
$$\widetilde u(x)=\lim_{\eps\to0}\frac{1}{|B_\eps(x)|}\int_{B_\eps(x)}u(y)\,dy\,,$$
where $B_\eps(x)$ denotes the ball of radius $\eps$ centered at $x$. We identify the (a.e.) equivalence class $u\in H^1(\R^d)$ with the (q.e.) equivalence class of quasi-continuous representatives $\widetilde u$.

We denote by $\M^+(\R^d)$ the set of positive Borel measures on $\R^d$ (not necessarily finite or Radon) and by $\M^+_{\cp}(\R^d)\subset\M^+(\R^d)$ the set of {\it capacitary measures}, i.e. the measures $\mu\in\M^+(\R^d)$ such that $\mu(E)=0$ for any set $E\subset\R^d$ of capacity zero. We note that when $\mu$ is a capacitary measure, the integral $\int_{\R^d} |u|^2\,d\mu$ is well-defined for each $u\in H^1(\R^d)$, i.e. if $\widetilde u_1$ and $\widetilde u_2$ are two quasi-continuous representatives of $u$, then $\int_{\R^d} |\widetilde u_1|^2\,d\mu=\int_{\R^d} |\widetilde u_2|^2\,d\mu$. 

For a subset $\O\subset\R^d$, we define the Sobolev space $H^1_0(\O)$ as
\be
H^1_0(\O)=\left\{u\in H^1(\R^d):\ u=0\ \hbox{q.e. on }\O^c\right\}.
\ee
Alternatively, by using the capacitary measure $I_\O$ defined as
\be\label{Iomega}
I_\O(E)=\begin{cases}
0&\hbox{if }\cp(E\setminus\O)=0\\
+\infty&\hbox{if }\cp(E\setminus\O)>0
\end{cases}
\qquad\hbox{for every Borel set }E\subset\R^d,
\ee
the Sobolev space $H^1_0(\O)$ can be defined as
$$H^1_0(\O)=\left\{u\in H^1(\R^d)\ :\ \int_{\R^d}|u|^2\,dI_\O<+\infty\right\}.$$
More generally, for any capacitary measure $\mu\in\M^+_{\cp}(\R^d)$, we define the space
$$H^1_\mu=\left\{u\in H^1(\R^d)\ :\ \int_{\R^d}|u|^2\,d\mu<+\infty\right\},$$
which is a Hilbert space when endowed with the norm $\|u\|_{1,\mu}$, where
$$\|u\|_{1,\mu}^2=\int_{\R^d}|\nabla u|^2\,dx+\int_{\R^d}u^2\,dx+\int_{\R^d} u^2\,d\mu.$$
If $u\notin H^1_\mu$, then we set $\|u\|_{1,\mu}=+\infty$.

For $\O\subset\R^d$, we define $\M^+_{\cp}(\O)$ as the space of capacitary measures $\mu\in\M^+_{\cp}(\R^d)$ such that $\mu(E)=+\infty$ for any set $E\subset\R^d$ such that $\cp(E\setminus\O)>0$. For $\mu\in\M^+_{\cp}(\R^d)$, we denote with $H^1_\mu(\O)$ the space $H^1_{\mu\vee I_\O}=H^1_\mu\cap H^1_0(\O)$. 

\begin{deff}\label{Gamma}
Given a metric space $(X,d)$ and sequence of functionals $J_n:X\to\R\cup\{+\infty\}$, we say that $J_n$ $\Gamma$-converges to the functional $J:X\to\R\cup\{+\infty\}$, if the following two conditions are satisfied:
\begin{enumerate}[(a)]
\item for every sequence $x_n$ converging in to $x\in X$, we have
$$J(x)\le\liminf_{n\to\infty}J_n(x_n);$$
\item for every $x\in X$, there exists a sequence $x_n$ converging to $x$, such that
$$J(x)=\lim_{n\to\infty}J_n(x_n).$$
\end{enumerate}
\end{deff}

For all details and properties of $\Gamma$-convergence we refer to \cite{dm93}; here we simply recall that, whenever $J_n$ $\Gamma$-converges to $J$,
\be
\min_{x\in X}J(x)\le\liminf_{n\to\infty}\min_{x\in X}J_n(x).
\ee

\begin{deff}\label{gamma}
We say that the sequence of capacitary measures $\mu_n\in\M^+_{\cp}(\O)$, $\gamma$-converges to the capacitary measure $\mu\in\M^+_{\cp}(\O)$ if the sequence of functionals $\|\cdot\|_{1,\mu_n}$ $\Gamma$-converges to the functional $\|\cdot\|_{1,\mu}$ in $L^2(\O)$, i.e. if the following two conditions are satisfied:
\begin{itemize}
\item for every sequence $u_n\to u$ in $L^2(\O)$ we have
$$\int_{\R^d}|\nabla u|^2\,dx+\int_{\R^d} u^2\,d\mu\le\liminf_{n\to\infty}\left\{\int_{\R^d}|\nabla u_n|^2\,dx+\int_{\R^d} u_n^2\,d\mu_n\right\};$$
\item for every $u\in L^2(\O)$, there exists $u_n\to u$ in $L^2(\O)$ such that
$$\int_{\R^d}|\nabla u|^2\,dx+\int_{\R^d} u^2\,d\mu=\lim_{n\to\infty}\left\{\int_{\R^d}|\nabla u_n|^2\,dx+\int_{\R^d} u_n^2\,d\mu_n\right\}.$$
\end{itemize}
\end{deff}

If $\mu\in\M^+_{\cp}(\O)$ and $f\in L^2(\O)$ we define the functional $J_\mu(f,\cdot):L^2(\O)\to\R\cup\{+\infty\}$ by
\be\label{F}
J_\mu(f,u)=\frac12\int_\O|\nabla u|^2\,dx+\frac12\int_\O u^2\,d\mu-\int_\O fu\,dx.
\ee

If $\O\subset\R^d$ is a bounded open set, $\mu\in\M^+_{\cp}(\O)$ and $f\in L^{2}(\O)$, then the functional $J_\mu(f,\cdot)$ has a unique minimizer $u\in H^1_\mu$ that verifies the PDE formally written as
\be\label{dumuf}
-\Delta u+\mu u=f,\qquad u\in H^1_\mu(\O),
\ee
and whose precise meaning is given in the weak form
$$\begin{cases}
\ds\int_\O\nabla u\cdot\nabla\varphi\,dx+\int_\O u\varphi\,d\mu
=\int_{\O}f\varphi\,dx,\qquad\forall\varphi\in H^1_\mu(\O),\\
u\in H^1_\mu(\O). 
\end{cases}$$
The resolvent operator of $-\Delta+\mu$, that is the map $\mathcal{R}_\mu$ that associates to every $f\in L^2(\O)$ the solution $u\in H^1_\mu(\O)\subset L^2(\O)$, is a compact linear operator in $L^2(\O)$ and so, it has a discrete spectrum 
$$0< \dots \le \Lambda_k\le\dots\le\Lambda_2\le\Lambda_1.$$
Their inverses $1/\Lambda_k$ are denoted by $\lambda_k(\mu)$ and are the eigenvalues of the operator $-\Delta+\mu$.

In the case $f=1$ the solution will be denoted by $w_\mu$ and when $\mu=I_\O$ we will use the notation $w_\O$ instead of $w_{I_\O}$. We also recall (see \cite{bubu05}) that if $\O$ is bounded, then the strong $L^2$-convergence of the minimizers $w_{\mu_n}$ to $w_\mu$ is equivalent to the $\gamma$-convergence of Definition \ref{gamma}.

\begin{oss}\label{convres}
An important well known characterization of the $\gamma$-convergence is the following: a sequence $\mu_n$ $\gamma$-converges to $\mu$, if and only if, the sequence of resolvent operators $\mathcal{R}_{\mu_n}$ associated to $-\Delta+\mu_n$, converges (in the strong convergence of linear operators on $L^2$) to the resolvent $\mathcal{R}_\mu$ of the operator $-\Delta +\mu$. A consequence of this fact is that the spectrum of the operator $-\Delta+\mu_n$ converges (pointwise) to the one of $-\Delta+\mu$.
\end{oss}

\begin{oss}\label{s2r2}
The space $\M^+_{\cp}(\O)$ endowed with the $\gamma$-convergence is metrizable. If $\O$ is bounded, one may take $d_\gamma(\mu,\nu)=\|w_\mu-w_\nu\|_{L^2}$. Moreover, in this case, in \cite{dmmo87} it is proved that the space $\M^+_{\cp}(\O)$ endowed with the metric $d_\gamma$ is compact.
\end{oss}

\begin{prop}\label{wgamma}
Let $\Omega\subset\R^d$ and let $V_n\in L^1(\O)$ be a sequence weakly converging in $L^1(\O)$ to a function $V$. Then the capacitary measures $V_n\,dx$ $\gamma$-converge to $V\,dx$.
\end{prop}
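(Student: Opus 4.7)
The plan is to verify the two conditions of Definition \ref{gamma} for $\mu_n = V_n\,dx$ and $\mu = V\,dx$; note that these measures are absolutely continuous with respect to Lebesgue, so they do lie in $\M^+_{\cp}(\O)$, and $V\ge 0$ by the weak convergence.

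For the recovery condition, I would start from the constant sequence $u_n = u$. If $u$ is bounded and lies in $H^1(\R^d)$, then $u^2\in L^\infty(\O)$, so the weak $L^1$-convergence $V_n\rightharpoonup V$ immediately yields $\int u^2 V_n\,dx\to\int u^2 V\,dx$, and the gradient contribution is constant. For a general $u\in H^1_{V\,dx}$, I would truncate $u^{(k)}=(u\wedge k)\vee(-k)$, apply the bounded case for each $k$, and diagonalize using monotone convergence $\int(u^{(k)})^2 V\,dx\to\int u^2 V\,dx$ (together with $u^{(k)}\to u$ in $L^2$ and in $H^1$). If $\|u\|_{1,V\,dx}=+\infty$ the constant sequence already forces the limit to be $+\infty$ by the same truncation trick.

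For the liminf condition, let $u_n\to u$ in $L^2(\O)$, assume the right-hand liminf is finite, and pass to a realizing subsequence. The bound on $\int|\nabla u_n|^2$ gives $u_n\rightharpoonup u$ weakly in $H^1(\R^d)$, hence $\int|\nabla u|^2\,dx\le\liminf\int|\nabla u_n|^2\,dx$. For the potential part I would truncate $u_n^{(k)}=(u_n\wedge k)\vee(-k)$, observe that $u_n^{(k)}\to u^{(k)}$ strongly in $L^2$ (and a.e. along a subsequence), and aim for
\[
\int(u_n^{(k)})^2 V_n\,dx \;\xrightarrow[n\to\infty]{}\; \int(u^{(k)})^2 V\,dx\qquad\text{for each fixed }k.
\]
Granting this and using $(u_n^{(k)})^2\le u_n^2$ together with monotone convergence in $k$, I would conclude
\[
\liminf_{n}\int u_n^2 V_n\,dx \;\ge\;\liminf_{n}\int(u_n^{(k)})^2 V_n\,dx \;=\;\int(u^{(k)})^2 V\,dx\;\xrightarrow[k\to\infty]{}\;\int u^2 V\,dx.
\]

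The main obstacle is the displayed convergence, which I would prove by splitting
\[
\int (u_n^{(k)})^2 V_n\,dx=\int\bigl((u_n^{(k)})^2-(u^{(k)})^2\bigr)V_n\,dx+\int(u^{(k)})^2 V_n\,dx.
\]
The second term converges to $\int(u^{(k)})^2 V\,dx$ since $(u^{(k)})^2\in L^\infty$. For the first, the subtlety is that the test function $(u_n^{(k)})^2$ itself depends on $n$, so weak $L^1$ convergence of $V_n$ alone does not suffice; the key ingredient is the Dunford--Pettis theorem, which ensures that a weakly converging sequence in $L^1$ is both uniformly integrable and tight. Tightness reduces matters to a set $K\subset\O$ of finite measure; on $K$, Egorov's theorem gives uniform smallness of $(u_n^{(k)})^2-(u^{(k)})^2$ off an exceptional set $E$ of arbitrarily small measure; uniform integrability then controls $\int_E V_n\,dx$, and the uniform bound $|(u_n^{(k)})^2-(u^{(k)})^2|\le 2k^2$ closes the estimate. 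This equi-integrability step is where the weak $L^1$ hypothesis (as opposed to mere $L^1$ boundedness) is essential.
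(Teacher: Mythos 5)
Your proof is correct and follows essentially the same route as the paper: the recovery sequence is built by truncating $u$ at level $t$ and diagonalizing against the weak $L^1$ convergence of $V_n$, exactly as in the paper's $\Gamma$-limsup step. The only difference is in the $\Gamma$-liminf step, where the paper simply invokes the strong--weak lower semicontinuity theorem for integral functionals from \cite{busc}, whereas you prove that lower semicontinuity by hand (truncation, Dunford--Pettis, Egorov); your argument is a correct, self-contained substitute for that citation.
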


\begin{proof}
We have to prove that the solutions $u_n=R_{V_n}(1)$ of
$$\begin{cases}
-\Delta u_n+V_n(x)u_n=1\\
u\in H^1_0(\O)
\end{cases}$$
weakly converge in $H^1_0(\O)$ to the solution $u=R_V(1)$ of
$$\begin{cases}
-\Delta u+V(x)u=1\\
u\in H^1_0(\O),
\end{cases}$$
or equivalently that the functionals
$$J_n(u)=\int_\O|\nabla u|^2\,dx+\int_\O V_n(x)u^2\,dx$$
$\Gamma\big(L^2(\O)\big)$-converge to the functional
$$J(u)=\int_\O|\nabla u|^2\,dx+\int_\O V(x)u^2\,dx.$$
The $\Gamma$-liminf inequality (Definition \ref{Gamma} (a)) is immediate since, if $u_n\to u$ in $L^2(\O)$, we have
$$\int_\O|\nabla u|^2\,dx\le\liminf_{n\to\infty}\int_\O|\nabla u_n|^2\,dx$$
by the lower semicontinuity of the $H^1(\O)$ norm with respect to the $L^2(\O)$-convergence, and
$$\int_\O V(x)u^2\,dx\le\liminf_{n\to\infty}\int_\O V_n(x)u_n^2\,dx$$
by the strong-weak lower semicontinuity theorem for integral functionals (see for instance \cite{busc}).

Let us now prove the $\Gamma$-limsup inequality (Definition \ref{Gamma} (b)) which consists, given $u\in H^1_0(\O)$, in constructing a sequence $u_n\to u$ in $L^2(\O)$ such that
\be\label{potgls}
\limsup_{n\to\infty}\int_\O|\nabla u_n|^2\,dx+\int_\O V_n(x)u_n^2\,dx
\le\int_\O|\nabla u|^2\,dx+\int_\O V(x)u^2\,dx.
\ee
For every $t>0$ let $u^t=(u\wedge t)\vee(-t)$; then, by the weak convergence of $V_n$, for $t$ fixed we have
$$\lim_{n\to\infty}\int_\O V_n(x)|u^t|^2\,dx=\int_\O V(x)|u^t|^2\,dx,$$
and
$$\lim_{t\to+\infty}\int_\O V(x)|u^t|^2\,dx=\int_\O V(x)|u|^2\,dx.$$
Then, by a diagonal argument, we can find a sequence $t_n\to+\infty$ such that
$$\lim_{n\to\infty}\int_\O V_n(x)|u^{t_n}|^2\,dx=\int_\O V(x)|u|^2\,dx.$$
Taking now $u_n=u^{t_n}$, and noticing that for every $t>0$
$$\int_\O|\nabla u^t|^2\,dx\le\int_\O|\nabla u|^2\,dx,$$
we obtain (\ref{potgls}) and so the proof is complete.
\end{proof}

In the case of weak* convergence of measures the statement of Proposition \ref{wgamma} is no longer true, as the following proposition shows.

\begin{prop}\label{VgeW}
Let $\O\subset\R^d$ ($d\ge2$) be a bounded open set and let $V,W\in L^1_+(\O)$ be two functions such that $V\ge W$. Then, there is a sequence $V_n\in L^1_+(\O)$, uniformly bounded in $L^1(\O)$, such that the sequence of measures $V_n(x)\,dx$ converges weakly* to $V(x)\,dx$ and $\gamma$-converges to $W(x)\,dx$.
\end{prop}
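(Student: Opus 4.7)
The plan is to spread the extra mass $g := V - W \geq 0$ onto a thin family of tiny balls chosen so that the resulting potentials approximate $V\,dx$ in the weak-$\ast$ topology yet have vanishing total capacity; then in the $\gamma$-limit the extra term disappears and only $W$ remains. Concretely, for each $n$, partition $\O$ (up to a null set) into cubes $\{Q^n_i\}_{i=1}^{N_n}$ of side $1/n$, place a ball $B^n_i = B(x^n_i, \rho_n)$ centered inside each $Q^n_i$, set $m^n_i = \int_{Q^n_i} g\,dx$, and define
$$V_n = W + \sum_{i=1}^{N_n} \frac{m^n_i}{|B^n_i|}\,\chi_{B^n_i}.$$
Then $\int_\O V_n\,dx = \int_\O V\,dx$ for every $n$, giving uniform $L^1$-boundedness, and for $\varphi \in C_c(\O)$ the quantity $\sum_i \frac{m^n_i}{|B^n_i|}\int_{B^n_i}\varphi\,dx$ is a Riemann-type sum for $\int_\O\varphi g\,dx$, so $V_n\,dx$ converges weakly-$\ast$ to $V\,dx$ regardless of the choice of radii.

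I would then pick $\rho_n$ so that $\sum_i \cp(B^n_i) \to 0$. Since $\cp(B^n_i)$ scales like $\rho_n^{d-2}$ for $d\ge 3$ and like $|\log\rho_n|^{-1}$ for $d=2$, while $N_n$ is only polynomial in $n$, any sufficiently fast decay of $\rho_n$ will do; this is the one spot where the assumption $d\ge 2$ is used. With such $\rho_n$ fixed I verify the two halves of Definition \ref{gamma}. The $\Gamma$-liminf inequality is immediate: if $u_n \to u$ in $L^2(\O)$, the bound $V_n \geq W$ combined with Fatou's lemma along an a.e.\ convergent subsequence yields $\liminf_n \int_\O u_n^2\,V_n\,dx \geq \int_\O u^2\,W\,dx$, and weak lower semicontinuity of the Dirichlet integral handles the gradient term.

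The heart of the proof, and what I expect to be the main obstacle, is constructing a recovery sequence. For $u \in H^1_0(\O) \cap L^\infty$ with $\int_\O u^2 W\,dx < \infty$ I would take $\phi_n$ to be the capacitary potential of $\bigcup_i B^n_i$, so that $0 \le \phi_n \le 1$, $\phi_n = 1$ q.e.\ on $\bigcup_i B^n_i$, and $\|\phi_n\|_{H^1}^2 \le \sum_i \cp(B^n_i) \to 0$. Setting $u_n := (1-\phi_n)u$, the function $u_n$ vanishes q.e.\ on the support of the extra potential, so
$$\int_\O u_n^2\,V_n\,dx = \int_\O u_n^2\,W\,dx \longrightarrow \int_\O u^2\,W\,dx$$
by dominated convergence. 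From $\nabla u_n = (1-\phi_n)\nabla u - u\nabla\phi_n$ and $\|u\nabla\phi_n\|_{L^2} \le \|u\|_\infty\|\nabla\phi_n\|_{L^2} \to 0$ one gets $\int_\O|\nabla u_n|^2\,dx \to \int_\O|\nabla u|^2\,dx$. A standard truncation plus diagonal extraction extends the recovery to all $u \in H^1_0(\O)$ with $\int_\O u^2 W\,dx<\infty$, completing the $\gamma$-convergence verification.
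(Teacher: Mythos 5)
Your proof is correct, and it reaches the conclusion by a somewhat different route than the paper, although the core idea is the same in both: concentrate the excess mass $V-W$ on a family of balls whose total capacity tends to zero, which is possible precisely because points are capacity-null in $\R^d$ for $d\ge2$. The differences are in the construction and, more substantially, in the verification. The paper first approximates $(V-W)\,dx$ by finite sums of Dirac masses and only then smears each Dirac mass over a small ball, so it needs a diagonal argument over two indices; you build $V_n$ in a single step from a grid of cubes, which is cleaner (modulo the small point that near $\partial\O$ you must place each ball inside $Q^n_i\cap\O$, harmless since $\rho_n$ is at your disposal). For the $\gamma$-convergence itself, the paper does not touch the definition directly: it invokes the characterization of $\gamma$-convergence as $L^2$-convergence of the torsion functions $w_\mu$, sandwiches $w_{W+V_{n,m}}$ between $w_W$ and $w_{W+I_{\O_{m,n}}}$ by the weak maximum principle, and then estimates the energy gap with the explicit competitor $w_W\prod_j(1-\phi_m(\cdot-x_j))$; this buys a verification involving a single function but leans on quoted results (the $w_\mu$ characterization and the maximum principle for relaxed Dirichlet problems). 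You instead verify Definition \ref{gamma} directly for every $u$, using $V_n\ge W$ plus Fatou for the liminf and the capacitary potential of $\bigcup_i B^n_i$ as a cutoff for the recovery sequence, followed by the same truncation-and-diagonal step the paper uses in Proposition \ref{wgamma}; this is more self-contained and makes transparent exactly where $d\ge2$ enters. Both arguments are complete; yours requires only the subadditivity of capacity and the existence of capacitary potentials (or near-minimizers with $0\le\phi_n\le1$ equal to $1$ in a neighborhood of the balls, which suffices and avoids any discussion of attainment).
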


\begin{proof}\footnote{the idea of this proof was suggested by Dorin Bucur}
Without loss of generality we can suppose $\int_\O(V-W)\,dx=1$. Let $\mu_n$ be a sequence of probability measures on $\O$ weakly* converging to $(V-W)\,dx$ and such that each $\mu_n$ is a finite sum of Dirac masses. For each $n\in\N$ consider a sequence of positive functions $V_{n,m}\in L^1(\O)$ such that $\int_\O V_{n,m}\,dx=1$ and $V_{n,m}dx$ converges weakly* to $\mu_n$ as $m\to\infty$. Moreover, we choose $V_{n,m}$ as a convex combination of functions of the form $|B_{1/m}|^{-1}\chi_{B_{1/m}(x_j)}$.

We now prove that for fixed $n\in\N$, $(V_{n,m}+W)\,dx$ $\gamma$-converges, as $m\to\infty$, to $W\,dx$ or, equivalently, that the sequence $w_{W+V_{n,m}}$ converges in $L^2$ to $w_W$, as $m\to\infty$. Indeed, by the weak maximum principle, we have
$$w_{W+I_{\O_{m,n}}}\le w_{W+V_{n,m}}\le w_W,$$
where $\O_{m,n}=\O\setminus \cup_j B_{1/m}(x_j)$ and $I_{\O_{m,n}}$ is as in \eqref{Iomega}.

Since a point has zero capacity in $\R^d$ ($d\ge2$) there exists a sequence $\phi_m\to0$ strongly in $H^1(\R^d)$ with $\phi_m=1$ on $B_{1/m}(0)$ and $\phi_m=0$ outside $B_{1/\sqrt m}(0)$. We have
\begin{align}
\int_\O|w_W-w_{W+I_{\O_{m,n}}}|^2\,dx
&\le2\|w_W\|_{L^\infty}\int_\O(w_W-w_{W+I_{\O_{m,n}}})\,dx\nonumber\\
&=4\|w_W\|_{L^\infty}\big(E(W+I_{\O_{m,n}})-E(W)\big)\label{VgeWeq1}\\
&\le4\|w_W\|_{L^\infty}\left(\int_\O\frac12|\nabla w_m|^2+\frac12 Ww_m^2-w_m\,dx\right.\nonumber\\
&\qquad\left.-\int_\O\frac12|\nabla w_W|^2+\frac12 Ww_W^2-w_W\,dx\right),\nonumber
\end{align}
where $w_m$ is any function in $\in H^1_0(\O_{m,n})$. Taking
$$w_m(x)=w_W(x)\prod_j(1-\phi_{m}(x-x_j)),$$
since $\phi_m\to0$ strongly in $H^1(\R^d)$, it is easy to see that $w_m\to w_W$ strongly in $H^1(\O)$ and so, by \eqref{VgeWeq1}, $w_{W+I_{\O_{m,n}}}\to w_W$ in $L^2(\O)$ as $m\to\infty$. Since the weak convergence of probability measures and the $\gamma$-convergence are both induced by metrics, a diagonal sequence argument brings to the conclusion. 
\end{proof}

\begin{oss}
When $d=1$, a result analogous to Proposition \ref{wgamma} is that any sequence $(\mu_n)$ weakly* converging to $\mu$ is also $\gamma$-converging to $\mu$. This is an easy consequence of the compact embedding of $H^1_0(\O)$ into the space of continuous functions on $\O$.
\end{oss}

We note that the hypothesis $V\ge W$ in Proposition \ref{VgeW} is necessary. Indeed, we have the following proposition, whose proof is contained in \cite[Theorem 3.1]{buvazo} and we report it here for the sake of completeness.

\begin{prop}\label{gamma<weak}
Let $\mu_n\in\M^+_{cap}(\O)$ be a sequence of capacitary Radon measures weakly* converging to the measure $\nu$ and $\gamma$-converging to the capacitary measure $\mu\in\M^+_{cap}(\O)$. Then $\mu\le\nu$ in $\O$.
\end{prop}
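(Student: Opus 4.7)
The plan is to combine the $\Gamma$-liminf half of the definition of $\gamma$-convergence with the weak-$*$ convergence $\mu_n\rightharpoonup\nu$. More precisely, I would fix $u\in C_c^\infty(\O)$ and plug the constant sequence $u_n\equiv u$ into condition (a) of Definition \ref{gamma}. Since the gradient term is finite and common to both sides, this gives
$$\int_{\R^d}u^2\,d\mu\le\liminf_{n\to\infty}\int_{\R^d}u^2\,d\mu_n.$$
Because $u^2\in C_c(\O)$ and $\mu_n$ converges weakly-$*$ to $\nu$, the right-hand side equals $\int u^2\,d\nu$; in particular the limit is finite, so $\int u^2\,d\mu<+\infty$ and one obtains
$$\int_\O u^2\,d\mu\le\int_\O u^2\,d\nu\qquad\text{for every }u\in C_c^\infty(\O).$$

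To upgrade this pointwise-on-squares inequality to the measure inequality $\mu\le\nu$ on $\O$, I would fix an open set $U$ compactly contained in $\O$ and choose $v_k\in C_c^\infty(U)$ with $0\le v_k\le 1$ and $v_k\nearrow\chi_U$ pointwise, so that $v_k^2\nearrow\chi_U$ as well. Monotone convergence then yields
$$\mu(U)=\lim_{k\to\infty}\int v_k^2\,d\mu\le\lim_{k\to\infty}\int v_k^2\,d\nu=\nu(U).$$
Outer regularity of the Radon measure $\nu$ gives $\mu(B)\le\nu(B)$ for every Borel $B$ compactly contained in $\O$, and exhausting a general Borel $B\subset\O$ by $B\cap\{x:\mathrm{dist}(x,\O^c)>1/n\}$ extends this to all Borel subsets of $\O$.

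The main point to watch is that the squares of smooth compactly supported functions do not exhaust the non-negative cone of $C_c(\O)$, so one cannot upgrade the inequality on squares to an inequality for every non-negative $f\in C_c(\O)$ via a direct linearity/density argument on test functions; this is circumvented by jumping straight from the inequality on squares to the comparison on open sets via the monotone approximation of $\chi_U$ by $v_k^2$, and by invoking the monotone convergence theorem, which remains valid even if $\mu$ takes the value $+\infty$ on some sets of $\O$. A further harmless subtlety is that the capacitary measures $\mu_n$ and $\mu$ should be understood as extended from $\O$ to $\R^d$ by $+\infty$ outside $\O$, so that the integrals over $\R^d$ and over $\O$ coincide for functions supported in $\O$.
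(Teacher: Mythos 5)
Your proof is correct and follows essentially the same route as the paper's: both test the $\Gamma$-liminf inequality of Definition \ref{gamma} with a constant sequence of smooth (or Lipschitz) compactly supported functions, cancel the finite gradient term, and identify the limit of $\int u^2\,d\mu_n$ via weak-$*$ convergence. The only difference is the measure-theoretic endgame --- you compare $\mu$ and $\nu$ on open sets via $v_k^2\nearrow\chi_U$ and monotone convergence before using outer regularity of $\nu$, whereas the paper sandwiches a compact $K$ between $\{u=1\}$ and $\{u>0\}$ and invokes Borel regularity of $\nu$; your variant is a cosmetic repackaging with the minor advantage of not tacitly relying on inner regularity of the (possibly non-Radon) capacitary measure $\mu$ when reducing to compact sets.
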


\begin{proof}
We note that it is enough to show that $\mu(K)\le\nu(K)$ whenever $K\subset\subset\O$ is a compact set. Let $u$ be a nonnegative smooth function with compact support in $\O$ such that $u\le 1$ in $\O$ and $u=1$ on $K$; we have
$$\mu(K)\le\int_\O u^2\,d\mu\le\liminf_{n\to\infty}\int_\O u^2\,d\mu_n=\int_\O u^2\,d\nu\le \nu\left(\{u>0\}\right).$$
Since $u$ is arbitrary, we have the conclusion by the Borel regularity of $\nu$.
\end{proof}

\section{Existence of optimal potentials in $L^p(\O)$}\label{s3}

In this section we consider the optimization problem 
\be\label{pop}
\min\left\{F(V)\ :\ V:\O\to [0,+\infty],\ \int_\O V^p\,dx\le1\right\},
\ee
where $p>0$ and $F(V)$ is a cost functional depending on the solution of some partial differential equation on $\O$. Typically, $F(V)$ is the minimum of some functional $J_V:H^1_0(\O)\to\R$ depending on $V$. A natural assumption in this case is the lower semicontinuity of the functional $F$ with respect to the $\gamma$-convergence, that is
\be\label{lsc}
F(\mu)\le\liminf_{n\to\infty}F(\mu_n),\qquad\hbox{whenever }\mu_n\to_\gamma\mu.
\ee

\begin{teo}\label{main}
Let $F:L^1_+(\O)\to\R$ be a functional, lower semicontinuous with respect to the $\gamma$-convergence, and let $\V$ be a weakly $L^1(\O)$ compact set. Then the problem 
\be\label{popK}
\min\left\{F(V)\ :\ V\in\V\right\},
\ee
admits a solution.
\end{teo}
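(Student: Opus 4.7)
The plan is to apply the direct method of the calculus of variations, using the bridge between weak $L^1$ convergence of potentials and $\gamma$-convergence of the associated capacitary measures provided by Proposition \ref{wgamma}.

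First I would pick a minimizing sequence $V_n \in \V$, so that $F(V_n) \to \inf\{F(V) : V \in \V\}$. Since $\V$ is weakly $L^1(\O)$-compact by assumption, I can extract a (not relabeled) subsequence such that $V_n \rightharpoonup V$ weakly in $L^1(\O)$ for some $V \in \V$; this is the only place where the compactness hypothesis on $\V$ is used, and it guarantees both that a candidate limit exists and that the candidate is admissible.

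Next I would invoke Proposition \ref{wgamma} to upgrade this weak $L^1$ convergence to $\gamma$-convergence of the associated capacitary measures: the sequence $V_n\,dx$ $\gamma$-converges to $V\,dx$ in the sense of Definition \ref{gamma}. Since $F$ is assumed lower semicontinuous with respect to $\gamma$-convergence (as in \eqref{lsc}), this yields
\begin{equation*}
F(V) \le \liminf_{n\to\infty} F(V_n) = \inf\{F(W) : W \in \V\},
\end{equation*}
so $V$ is a minimizer.

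There is essentially no hard obstacle here, since both ingredients (weak $L^1$ compactness giving a limit, and $\gamma$-lower semicontinuity passing to the limit) are already in place. The only point that deserves attention is the identification of the potential $V \in L^1_+(\O)$ with the capacitary measure $V\,dx \in \M^+_{\cp}(\O)$ in the statement of $\gamma$-lower semicontinuity of $F$; this identification is implicit in the framework set up in Section \ref{s2}, and Proposition \ref{wgamma} is precisely phrased so as to make the argument above go through without further adjustment.
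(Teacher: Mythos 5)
Your proposal is correct and follows exactly the same route as the paper's own proof: a minimizing sequence, extraction of a weak $L^1$ limit $V\in\V$ by the compactness assumption, upgrading to $\gamma$-convergence via Proposition \ref{wgamma}, and conclusion by the $\gamma$-lower semicontinuity of $F$. Nothing is missing.
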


\begin{proof}
Let $(V_n)$ be a minimizing sequence in $\V$. By the compactness assumption on $\V$, we may assume that $V_n$ tends weakly $L^1(\O)$ to some $V\in\V$. By Proposition \ref{wgamma}, we have that $V_n$ $\gamma$-converges to $V$ and so, by the semicontinuity of $F$,
$$F(V)\le\liminf_{n\to\infty}F(V_n),$$
which gives the conclusion.
\end{proof}

\begin{oss}
Theorem \ref{main} applies for instance to the integral functionals and to the spectral functionals considered in the introduction; it is not difficult to show that they are lower semicontinuous with respect to the $\gamma$-convergence.
\end{oss}

\begin{oss}
In some special cases the solution of \eqref{pop} can be written explicitly in terms of the solution of some partial differential equation on $\O$. This is the case of the Dirichlet Energy, that we discuss in Subsection \ref{s31}, and of the first eigenvalue of the Dirichlet Laplacian $\lambda_1$ (see \cite[Chapter 8]{hen03}).
\end{oss}

The compactness assumption on the admissible class $\V$ for the weak $L^1(\O)$ convergence in Theorem \ref{main} is for instance satisfied if $\Omega$ has finite measure and $\V$ is a convex closed and bounded subset of $L^p(\Omega)$, with $p\ge 1$. In the case of measures an analogous result holds.

\begin{teo}\label{mainmeas}
Let $\O\subset\R^d$ be a bounded open set and let $F:\M^+_{\cp}(\O)\to\R$ be a functional lower semicontinuous with respect to the $\gamma$-convergence. Then the problem 
\be\label{popmeas}
\min\left\{F(\mu)\ :\ \mu\in \M^+_{\cp}(\O),\ \mu(\O)\le1\right\},
\ee
admits a solution.
\end{teo}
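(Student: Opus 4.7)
My plan is to run the direct method using the compactness of $\M^+_{\cp}(\O)$ in the $\gamma$-topology, with a separate argument to ensure that the $\gamma$-limit stays in the admissible class.

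First, I take a minimizing sequence $(\mu_n) \subset \M^+_{\cp}(\O)$ with $\mu_n(\O) \le 1$. By Remark \ref{s2r2}, for $\O$ bounded the space $(\M^+_{\cp}(\O), d_\gamma)$ is compact and metrizable, so up to a subsequence there exists $\mu \in \M^+_{\cp}(\O)$ with $\mu_n \to_\gamma \mu$. The $\gamma$-lower semicontinuity of $F$ then gives
\[
F(\mu) \le \liminf_{n\to\infty} F(\mu_n),
\]
so it only remains to verify that $\mu$ is admissible, i.e.\ that $\mu(\O) \le 1$.

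The subtle point is that $\gamma$-convergence a priori gives no control on the total mass, and there is no reason for $\mu_n \to_\gamma \mu$ to imply $\mu_n \to \mu$ as measures. To bridge this gap I use weak$^*$ compactness of finite measures. Since each $\mu_n$ is a capacitary Borel measure on the bounded open set $\O$ with $\mu_n(\O)\le 1$, it is a finite Radon measure; by Banach--Alaoglu applied to $C_0(\overline{\O})^*$, up to extracting a further subsequence we may assume $\mu_n \rightharpoonup^* \nu$ for some finite positive Radon measure $\nu$ on $\overline{\O}$ with $\nu(\overline{\O})\le 1$. Now both $\gamma$- and weak$^*$-limits exist, and Proposition \ref{gamma<weak} applies and yields $\mu \le \nu$ in $\O$. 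In particular
\[
\mu(\O) \le \nu(\O) \le \nu(\overline{\O}) \le 1,
\]
so $\mu$ lies in the admissible class and is therefore a minimizer.

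The main obstacle, as indicated above, is exactly the mismatch between the $\gamma$-topology (in which $F$ is lower semicontinuous and $\M^+_{\cp}(\O)$ is compact) and the weak$^*$ topology of measures (in which the mass constraint is lower semicontinuous). Proposition \ref{gamma<weak} is precisely the tool that reconciles the two, letting the weak$^*$ limit act as an upper bound on the $\gamma$-limit and thus transferring the mass bound.
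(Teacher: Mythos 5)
Your proof is correct and follows essentially the same route as the paper: extract a subsequence that simultaneously $\gamma$-converges (by compactness of $\M^+_{\cp}(\O)$) and converges weakly$^*$ (by mass boundedness), then invoke Proposition \ref{gamma<weak} to transfer the mass constraint from the weak$^*$ limit to the $\gamma$-limit. The only difference is that you spell out the weak$^*$ compactness and the inequality $\mu(\O)\le\nu(\O)\le 1$ in more detail than the paper does.
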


\begin{proof}
Let $(\mu_n)$ be a minimizing sequence. Then, up to a subsequence $\mu_n$ converges weakly* to some measure $\nu$ and $\gamma$-converges to some measure $\mu\in\M^+_{\cp}(\O)$. By Proposition \ref{gamma<weak}, we have that $\mu(\O)\le\nu(\O)\le1$ and so, $\mu$ is a solution of \eqref{popmeas}.
\end{proof}
 
The following example shows that the optimal solution of problem \eqref{popmeas} is not, in general, a function $V(x)$, even when the optimization criterion is the energy $\E_f$ introduced in \eqref{energy}. On the other hand, an explicit form for the optimal potential $V(x)$ will be provided in Proposition \ref{maxone} assuming that the right-hand side $f$ is in $L^2(\O)$.
 
\begin{exam}\label{examdelta}
Let $\O=(-1,1)$ and consider the functional 
$$F(\mu)=-\min\left\{\frac12\int_\O|u'|^2\,dx+\frac12\int_\O u^2\,d\mu-u(0)\ :\ u\in H^1_0(\O)\right\}.$$
Then, for any $\mu$ such that $\mu(\O)\le1$, we have 
\be\label{examdelta1}
F(\mu)\ge-\min\left\{\frac12\int_\O|u'|^2\,dx+\frac12\big(\sup_\O u\big)^2-u(0)\ :\ u\in H^1_0(\O),\ u\ge0\right\}.
\ee
By a symmetrization argument, the minimizer $u$ of the right-hand side of \eqref{examdelta1} is radially decreasing; moreover, $u$ is linear on the set $u<M$, where $M=\sup u$, and so it is of the form 
\be
u(x)=\begin{cases}
\begin{array}{ll}
\frac{M}{1-\alpha}x+\frac{M}{1-\alpha},&\ x\in[-1,-\alpha],\\
M,&\ x\in[-\alpha,\alpha],\\
-\frac{M}{1-\alpha}x+\frac{M}{1-\alpha},&\ x\in[\alpha,1],
\end{array}
\end{cases}
\ee 
for some $\alpha\in[0,1]$. A straightforward computation gives $\alpha=0$ and $M=1/3$. Thus, $u$ is also the minimizer of 
$$F(\delta_0)=-\min\left\{\frac12\int_\O|u'|^2\,dx+\frac12u(0)^2-u(0)\ :\ u\in H^1_0(\O)\right\},$$
and so $\delta_0$ is the solution of
$$\min\left\{F(\mu)\ :\ \mu(\O)\le1\right\}.$$
\end{exam}

\subsection{Minimization problems in $L^p$ concerning the Dirichlet Energy functional}\label{s31}

Let $\O\subset\R^d$ be a bounded open set and let $f\in L^2(\O)$. By Theorem \ref{main}, the problem
\be\label{maxpb}
\min\left\{-\E_f(V)\ :\ V\in\V\right\}\qquad\hbox{with}\qquad\V=\left\{V\ge0,\ \int_\O V^p\,dx\le1\right\},
\ee
admits a solution, where $\E_f(V)$ is the energy functional defined in \eqref{energy}. We notice that, replacing $-\E_f(V)$ by $\E_f(V)$, makes problem \eqref{maxpb} trivial, with the only solution $V\equiv0$. Minimization problems for $\E_f$ will be considered in Section \ref{s4} for admissible classes of the form
$$\V=\left\{V\ge0,\ \int_\O V^{-p}\,dx\le1\right\}.$$
Analogous results for $F(V)=-\lambda_1(V)$ were proved in \cite[Theorem 8.2.3]{hen03}.

\begin{prop}\label{maxex}
Let $\O\subset\R^d$ be a bounded open set, $1<p<\infty$ and $f\in L^2(\O)$. Then the problem \eqref{maxpb} has a unique solution
$$V_p=\left(\int_\O|u_p|^{2p/(p-1)}\,dx\right)^{-1/p}|u_p|^{-1+(p+1)/(p-1)},$$
where $u_p\in H^1_0(\O)\cap L^{2p/(p-1)}(\O)$ is the minimizer of the functional
\be\label{Ja}
J_p(u):=\frac12 \int_\O|\nabla u|^2\,dx+\frac12\left(\int_\O|u|^{2p/(p-1)}\,dx\right)^{(p-1)/p}-\int_\O uf\,dx.
\ee
Moreover, we have $\E_f(V_p)=J_p(u_p)$.
\end{prop}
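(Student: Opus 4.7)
The proof rests on a dual-variational identity. For every $u\in H^1_0(\O)$, H\"older's inequality with exponents $p$ and $p/(p-1)$ yields
$$\sup\Bigl\{\int_\O Vu^2\,dx\ :\ V\ge0,\ \int_\O V^p\,dx\le1\Bigr\}=\Bigl(\int_\O|u|^{2p/(p-1)}\,dx\Bigr)^{(p-1)/p},$$
and, for $u\not\equiv0$, the supremum is uniquely attained by
$$V^*(u)=\Bigl(\int_\O|u|^{2p/(p-1)}\,dx\Bigr)^{-1/p}|u|^{2/(p-1)},$$
which is precisely the formula in the statement with $u$ in place of $u_p$. Writing $J_V(u)=\frac12\int|\nabla u|^2+\frac12\int Vu^2-\int fu$, so that $\E_f(V)=\inf_u J_V(u)$, this reads $J_p(u)=\sup_{V\in\V}J_V(u)$. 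The plan is therefore to exhibit a saddle pair $(u_p,V_p)$ for $J_V(u)$, and to read off both the optimal potential and the optimal value from it.

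\textbf{Construction of the saddle.} The functional $J_p$ is strictly convex, lower semicontinuous, and coercive on $X:=H^1_0(\O)\cap L^{2p/(p-1)}(\O)$: Poincar\'e's inequality lets one absorb $\int fu$ into $\frac14\|\nabla u\|_{L^2}^2+C\|f\|_{L^2}^2$, while the remaining two terms of $J_p$ are nonnegative. Standard direct-method arguments then give a unique minimizer $u_p\in X$. The Euler--Lagrange equation of $J_p$ reads
$$\int_\O\nabla u_p\cdot\nabla\varphi\,dx+\Bigl(\int_\O|u_p|^{2p/(p-1)}\Bigr)^{-1/p}\int_\O|u_p|^{2/(p-1)}u_p\,\varphi\,dx=\int_\O f\varphi\,dx,$$
in which one recognizes $V_p$ exactly as in the statement, together with the PDE $-\Delta u_p+V_p u_p=f$. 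In particular $u_p$ is the (unique, by strict convexity) minimizer of $J_{V_p}$, so $\E_f(V_p)=J_{V_p}(u_p)$. Since $V_p$ was tailored to saturate the H\"older bound of the first paragraph at $u=u_p$, a direct computation gives $\int V_p^p\,dx=1$, so $V_p\in\V$, and $J_{V_p}(u_p)=J_p(u_p)$.

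\textbf{Optimality and uniqueness.} For every $V\in\V$, testing the infimum defining $\E_f(V)$ at $u_p$ and then applying the H\"older bound of step one,
$$\E_f(V)\le J_V(u_p)\le J_p(u_p)=\E_f(V_p),$$
so $V_p$ solves \eqref{maxpb} with optimal value $J_p(u_p)$. If some $V^\star\in\V$ achieves the same value, equality must hold at both $\le$'s: the first forces $u_p$ to minimize $J_{V^\star}$ (hence $-\Delta u_p+V^\star u_p=f$), while the H\"older equality case forces $V^\star=c|u_p|^{2/(p-1)}$, and the constraint $\int(V^\star)^p\,dx\le1$ pins down $c=(\int|u_p|^{2p/(p-1)})^{-1/p}$, giving $V^\star=V_p$. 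The points that I expect to need care with are the H\"older equality case, which requires $u_p\not\equiv0$ (true as soon as $f\not\equiv0$, otherwise the problem is degenerate), and the rigorous Euler--Lagrange computation on $X$ when $2p/(p-1)$ exceeds the Sobolev exponent.
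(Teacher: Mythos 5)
Your proposal is correct and follows essentially the same route as the paper: weak min--max duality, explicit computation of the inner maximization over $V$ via H\"older (yielding $J_p(u)=\sup_{V\in\V}J_V(u)$), the direct method and strict convexity for the unique minimizer $u_p$, the Euler--Lagrange equation to identify $V_p$ and close the duality gap, and the H\"older equality case for uniqueness of $V_p$. Your explicit flagging of the equality case and of the saturation $\int_\O V_p^p\,dx=1$ is, if anything, slightly more careful than the paper's write-up.
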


\begin{proof}
We first show that we have 
\be\label{maxexfeb1}
\max_{V\in\V}\ \min_{u\in H^1_0(\O)}\int_\O\left(\frac12|\nabla u|^2+u^2V-uf\right)\,dx\le\min_{u\in H^1_0(\O)}\ \max_{V\in\V}\int_\O\left(\frac12|\nabla u|^2+u^2V-uf\right)\,dx,
\ee
where the maximums are taken over all positive functions $V\in L^p(\O)$. For a fixed $u\in H^1_0(\O)$, the maximum on the right-hand side (if finite) is achieved for a function $V$ such that $\Lambda pV^{p-1}=u^2$, where $\Lambda$ is a Lagrange multiplier. By the condition $\int_\O V^p\,dx=1$ we obtain that the maximum is achieved for
$$V=\left(\int_\O|u|^{\frac{2p}{p-1}}\,dx\right)^{1/p}|u|^{\frac{2}{p-1}}.$$
Substituting in \eqref{maxexfeb1}, we obtain
\be\label{E<J}
\max\left\{\E_f(V)\ :\ V\in\V\right\}\le\min\left\{J_p(u):\ {u\in H^1_0(\O)}\right\}.
\ee
Let $u_n$ be a minimizing sequence for $J_p$. Since $\inf\,J_p\le0$, we can assume $J_p(u_n)\le0$ for each $n\in\N$. Thus, we have
\be\label{apriori}
\frac12 \int_\O|\nabla u_n|^2\,dx+\frac12\left(\int_\O|u_n|^{2p/(p-1)}\,dx\right)^{(p-1)/p}\le\int_\O u_nf\,dx\le C\|f\|_{L^2(\O)}\|\nabla u_n\|_{L^{2}},
\ee
where $C$ is a constant depending on $\O$. Thus we obtain
\be\label{apriori2}
\int_\O|\nabla u_n|^2\,dx+\left(\int_\O|u_n|^{2p/(p-1)}\,dx\right)^{(p-1)/p}\le 4C^2\|f\|^2_{L^2(\O)},
\ee
and so, up to subsequence $u_n$ converges weakly in $H^1_0(\O)$ and $L^{2p/(p-1)}(\O)$ to some $u_p\in H^1_0(\O)\cap L^{2p/(p-1)}(\O)$. By the semicontinuity of the $L^2$-norm of the gradient and the $L^{\frac{2p}{p-1}}$-norm and the fact that $\int_\O fu_n\,dx\to\int_\O fu_p\,dx$, as $n\to\infty$, we have that $u_p$ is a minimizer of $J_p$. By the strict convexity of $J_p$, we have that $u_p$ is unique. Moreover, by \eqref{apriori} and \eqref{apriori2}, $J_p(u_p)>-\infty$. Writing down the Euler-Lagrange equation for $u_p$, we obtain
$$-\Delta u_p+\left(\int_\O|u_p|^{2p/(p-1)}\,dx\right)^{-1/p}|u_p|^{2/(p-1)}u_p=f.$$
Setting
$$V_p=\left(\int_\O|u_p|^{2p/(p-1)}\,dx\right)^{-1/p}|u_p|^{2/(p-1)},$$
we have that $\int_\O V_p^p\,dx=1$ and $u_p$ is the solution of 
\be\label{eqalpha}
-\Delta u_p+V_p u_p=f.
\ee
In particular, we have $J_p(u_p)=\E_p(V_p)$ and so $V_p$ solves \eqref{maxpb}. The uniqueness of $V_p$ follows by the uniqueness of $u_p$ and the equality case in the H\"older inequality
$$\int_\O u^2 V\,dx\le\left(\int_\O V^p\,dx\right)^{1/p}\left(\int_\O|u|^{2p/(p-1)}\,dx\right)^{(p-1)/p}\le\left(\int_\O|u|^{2p/(p-1)}\,dx\right)^{(p-1)/p}.$$
\end{proof}

When the functional $F$ is the energy $\E_f$, the existence result holds also in the case $p=1$. Before we give the proof of this fact in Proposition \ref{maxone}, we need some preliminary results. We also note that the analogous results were obtained in the case $F=-\lambda_1$ (see \cite[Theorem 8.2.4]{hen03}) and in the case $F=-\E_f$, where $f$ is a positive function (see \cite{buvazo}).

\begin{oss}\label{unifest}
Let $u_p$ be the minimizer of $J_p$, defined in \eqref{Ja}. By \eqref{apriori2}, we have the estimate
\be\label{unifest1}
\|\nabla u_p\|_{L^2(\O)}+\|u_p\|_{L^{2p/(p-1)}(\O)}\le 4C^2\|f\|_{L^2(\O)},
\ee
where $C$ is the constant from \eqref{apriori}. Moreover, we have $u_p\in H^2_{loc}(\O)$ and for each open set $\O'\subset\subset\O$, there is a constant $C$ not depending on $p$ such that
$$\|u_p\|_{H^2(\O')}\le C(f,\O').$$
Indeed, $u_p$ satisfies the PDE
\be\label{unifest2}
-\Delta u+c|u|^\alpha u=f,
\ee
with $c>0$ and $\alpha=2/(p-1)$, and standard elliptic regularity arguments (see \cite[Section 6.3]{ev}) give that $u\in H^2_{loc}(\O)$. To show that $\|u_p\|_{H^2(\O')}$ is bounded independently of $p$ we apply the Nirenberg operator $\partial^h_ku=\frac{u(x+he_k)-u(x)}{h}$ on both sides of \eqref{unifest2}, and multiplying by $\phi^2\partial_k^h u$, where $\phi$ is an appropriate cut-off function which equals $1$ on $\O'$, we have 
\begin{align}\label{unifest3}
&\int_\O\phi^2|\nabla \partial_k^hu|^2\,dx+\int_\O\nabla(\partial_k^h u)\cdot\nabla(\phi^2)\partial_k^h u\,dx+c(\alpha+1)\int_\O\phi^2|u|^\alpha|\partial_k^h u|^2\,dx\\
&\hskip9truecm=-\int f\partial_k^h(\phi^2\partial_{k}^h u)\,dx,\nonumber
\end{align}
for all $k=1,\dots,d$. Some straightforward manipulations now give
\be\label{unifest4}
\|\nabla^2u\|_{L^2(\O')}^2\le \sum_{k=1}^d\int_\O\phi^2|\nabla \partial_ku|^2\,dx\le C(\O')\left(\|f\|_{L^2(\{\phi^2> 0\})}+\|\nabla u\|_{L^2(\O)}\right).
\ee
\end{oss}

\begin{lemma}\label{sc}
Let $\O\subset\R^d$ be an open set and $f\in L^2(\O)$. Consider the functional $J_1:L^{2}(\O)\to\R$ defined by
\be\label{J1}
J_1(u):=\frac12\int_\O|\nabla u|^2\,dx+\frac12\|u\|_\infty^2-\int_\O uf\,dx,
\ee
Then, $J_p$ $\Gamma$-converges in $L^{2}(\O)$ to $J_1$, as $p\to1$, where $J_p$ is defined in \eqref{Ja}. 
\end{lemma}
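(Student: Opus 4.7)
Set $q_p:=2p/(p-1)$, so that $q_p\to+\infty$ as $p\to1^+$, and observe the algebraic identity $q_p\cdot(p-1)/p=2$, which gives $\bigl(\int_\O|u|^{q_p}\,dx\bigr)^{(p-1)/p}=\|u\|_{L^{q_p}(\O)}^2$. Since the gradient term $\tfrac12\int_\O|\nabla u|^2\,dx$ and the linear term $-\int_\O uf\,dx$ are identical in $J_p$ and $J_1$, the claimed $\Gamma$-convergence reduces to showing that $u\mapsto\|u\|_{L^{q_p}(\O)}^2$ $\Gamma$-converges in $L^2(\O)$ to $u\mapsto\|u\|_{L^\infty(\O)}^2$. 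I will argue under the assumption $|\O|<+\infty$, the natural setting of the surrounding results.

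For the $\Gamma$-liminf inequality I take $u_p\to u$ in $L^2(\O)$ and, without loss of generality, assume $\liminf_p J_p(u_p)<+\infty$; along a realizing subsequence both $\int_\O|\nabla u_p|^2\,dx$ and $\|u_p\|_{L^{q_p}(\O)}^2$ stay bounded, because $|\int_\O u_pf\,dx|\le\|f\|_{L^2}\|u_p\|_{L^2}$ is uniformly bounded. The gradient term then passes to the liminf by weak $L^2$ lower semicontinuity, and the linear term passes to the limit by the $L^2$ convergence. For the $L^{q_p}$ contribution, I fix any $q\in[2,+\infty)$ and, for $p$ close enough to $1$ that $q_p\ge q$, apply H\"older's inequality to get
$$\|u_p\|_{L^q(\O)}\le|\O|^{1/q-1/q_p}\|u_p\|_{L^{q_p}(\O)}.$$
Hence $u_p$ is bounded in $L^q(\O)$, and together with $u_p\to u$ in $L^2(\O)$ this yields $u_p\rightharpoonup u$ weakly in $L^q(\O)$, so $\|u\|_{L^q(\O)}\le|\O|^{1/q}\liminf_p\|u_p\|_{L^{q_p}(\O)}$. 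Sending $q\to+\infty$ and invoking $\|u\|_{L^q(\O)}\to\|u\|_{L^\infty(\O)}$ on a finite measure domain yields $\|u\|_{L^\infty(\O)}\le\liminf_p\|u_p\|_{L^{q_p}(\O)}$, as required.

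For the $\Gamma$-limsup inequality, given $u\in L^2(\O)$ the case $J_1(u)=+\infty$ is trivial with $u_p\equiv u$. Otherwise $u\in H^1_0(\O)\cap L^\infty(\O)$, and I again take the constant recovery sequence $u_p\equiv u$: the gradient and linear parts coincide with those of $J_1(u)$, while the classical convergence $\|u\|_{L^{q_p}(\O)}\to\|u\|_{L^\infty(\O)}$ as $q_p\to+\infty$ on a finite measure set makes the norm contribution pass to the limit. The main obstacle is precisely the $\Gamma$-liminf bound on $\|u\|_{L^\infty}$; the H\"older interpolation step above resolves it and, as a by-product, shows a priori that any limit $u$ with finite liminf must belong to $L^\infty(\O)$.
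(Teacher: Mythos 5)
Your proof is correct on a finite--measure domain and is structurally the same as the paper's: both arguments reduce the lemma to the single inequality $\|u\|_{L^\infty(\O)}\le\liminf_n\|u_n\|_{L^{\alpha_n}(\O)}$ for $u_n\to u$ in $L^2(\O)$ and $\alpha_n\to\infty$, combine it with lower semicontinuity of the Dirichlet term and continuity of $u\mapsto\int_\O uf\,dx$, and use the constant recovery sequence for the $\Gamma$-limsup. (The opening claim that the $\Gamma$-convergence ``reduces to'' that of the norm term alone is loose, since $\Gamma$-limits do not split across sums in general, but your actual argument treats the three terms separately and correctly, and the reduction is harmless here precisely because the recovery sequence is constant.)

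The one substantive difference is \emph{where} H\"older is applied, and it costs you generality. Your bound $\|u_p\|_{L^q(\O)}\le|\O|^{1/q-1/q_p}\|u_p\|_{L^{q_p}(\O)}$ genuinely requires $|\O|<+\infty$, whereas the lemma is stated for an arbitrary open set $\O\subset\R^d$ and the $p=1$ argument is later reused on $\O=\R^d$ (Proposition \ref{maxonerd}). The paper's proof of the key inequality applies H\"older only on the superlevel set $\omega_\eps=\{|u|>M-\eps\}$, which has finite measure by Chebyshev because $u\in L^2(\O)$; this yields $\liminf_n\|u_n\|_{L^{\alpha_n}}\ge|\omega_\eps|^{-1}\int_{\omega_\eps}|u|\,dx\ge M-\eps$ with no hypothesis on $|\O|$. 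If you want to keep your scheme in full generality, replace the comparison against $|\O|$ by the interpolation $\|u_p\|_{L^q}\le\|u_p\|_{L^2}^{\theta}\|u_p\|_{L^{q_p}}^{1-\theta}$ with $1/q=\theta/2+(1-\theta)/q_p$, which uses only the bounded $L^2$ norms, and then let $q\to\infty$ as before; the limsup half already survives on infinite-measure domains via $\|u\|_{L^{q_p}}\le\|u\|_{L^\infty}^{1-2/q_p}\|u\|_{L^2}^{2/q_p}$. With that adjustment the two proofs are essentially identical.
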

\begin{proof}
Let $v_n\in L^{2}(\O)$ be a sequence of positive functions converging in $L^{2}$ to $v\in L^{2}(\O)$ and let $\alpha_n\to+\infty$. Then, we have that 
\be\label{sc1}
\|v\|_{L^\infty(\O)}\le\liminf_{n\to\infty}\|v_n\|_{L^{\alpha_n}(\O)}.
\ee
In fact, suppose first that $\|v\|_{L^\infty}=M<+\infty$ and let $\omega_\eps=\{v>M-\eps\}$, for some $\eps>0$. Then, we have
$$\liminf_{n\to\infty}\|v_n\|_{L^{\alpha_n}(\O)}
\ge\lim_{n\to\infty}|\omega_\eps|^{(1-\alpha_n)/\alpha_n}\int_{\omega_\eps} v_n\,dx=|\omega_\eps|^{-1}\int_{\omega_\eps}v\,dx\ge M-\eps,$$
and so, letting $\eps\to0$, we have $\liminf_{n\to\infty}\|v_n\|_{L^{\alpha_n}(\O)}\le M$. If $\|v\|_{L^\infty}=+\infty$, then setting $\omega_k=\{v>k\}$, for any $k\ge1$, and arguing as above, we obtain \eqref{sc1}.\\
Let $u_n\to u$ in $L^2(\O)$. Then, by the semicontinuity of the $L^2$ norm of the gradient and \eqref{sc1} and the continuity of the term $\int_\O uf\,dx$, we have 
\be
J_1(u)\le\liminf_{n\to\infty}J_{p_n}(u_n),
\ee 
for any decreasing sequence $p_n\to1$. On the other hand, for any $u\in L^2$, we have $J_{p_n}(u)\to J_1(u)$ as $n\to\infty$ and so, we have the conclusion.
\end{proof}

\begin{prop}\label{maxone}
Let $\O\subset\R^d$ be a bounded open set and $f\in L^2(\O)$. Then there is a unique solution of problem \eqref{maxpb} with $p=1$, given by
$$V_1=\frac1M \left(\chi_{\omega_+} f-\chi_{\omega_-}f\right),$$
where $M=\|u_1\|_{L^\infty(\O)}$, $\omega_+=\{u_1=M\}$, $\omega_-=\{u_1=-M\}$, being $u_1\in H^1_0(\O)\cap L^{\infty}(\O)$ the unique minimizer of the functional $J_1$, defined in \eqref{J1}. In particular, $\int_{\omega_+}f\,dx-\int_{\omega_-}f\,dx=M$, $f\ge0$ on $\omega_+$ and $f\le0$ on $\omega_-$.
\end{prop}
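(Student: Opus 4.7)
The plan is to construct the minimizer $u_1$ of $J_1$ by passing to the limit as $p\to 1^+$ in the minimizers $u_p$ of Proposition \ref{maxex}, and then to read off $V_1$ from an Euler--Lagrange analysis of $u_1$. By Remark \ref{unifest} the family $(u_p)$ is uniformly bounded in $H^1_0(\O)\cap L^\infty(\O)$ and in $H^2_{loc}(\O)$, and by Lemma \ref{sc} the functionals $J_p$ $\Gamma$-converge to $J_1$ in $L^2(\O)$; therefore, up to a subsequence, $u_p\to u_1$ in $L^2(\O)$ and weakly in $H^1_0(\O)$, and the $\Gamma$-liminf inequality forces $u_1$ to be a minimizer of $J_1$. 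Uniqueness follows from the strict convexity of $J_1$, inherited from the quadratic gradient term. The uniform $H^2_{loc}$ bound passes to the limit, so $u_1\in H^2_{loc}(\O)$; since $u_1$ is constant on the plateau sets $\omega_\pm=\{u_1=\pm M\}$, where $M=\|u_1\|_{L^\infty(\O)}$, we have $\nabla u_1=0$ and hence $\Delta u_1=0$ almost everywhere on $\omega_+\cup\omega_-$.

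Writing $J_1$ as the sum of the smooth functional $u\mapsto\tfrac12\int|\nabla u|^2\,dx-\int fu\,dx$ on $H^1_0(\O)$ and of the convex, lower semicontinuous term $\Phi(u)=\tfrac12\|u\|_\infty^2$, the optimality condition $0\in\partial J_1(u_1)$ yields the Euler--Lagrange equation
\begin{equation*}
-\Delta u_1+M\sigma=f\quad\text{in }\mathcal D'(\O),
\end{equation*}
where $\sigma=\sigma_+-\sigma_-$ is a signed Radon measure in $M\,\partial\|\cdot\|_\infty(u_1)$, characterized by $\sigma_\pm\ge 0$, $\mathrm{supp}(\sigma_\pm)\subset\omega_\pm$, and $\sigma_+(\O)+\sigma_-(\O)=1$. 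Because $\Delta u_1\in L^2_{loc}(\O)$ vanishes almost everywhere on $\omega_+\cup\omega_-$, evaluating the EL on these sets identifies $M\sigma$ with the function $f\chi_{\omega_+\cup\omega_-}$, so $\sigma_+=\tfrac{1}{M}f\chi_{\omega_+}$ and $\sigma_-=-\tfrac{1}{M}f\chi_{\omega_-}$. The positivity of $\sigma_\pm$ then forces the sign conditions $f\ge 0$ a.e.\ on $\omega_+$ and $f\le 0$ a.e.\ on $\omega_-$, while the unit total mass $\sigma_+(\O)+\sigma_-(\O)=1$ becomes the identity $\int_{\omega_+}f\,dx-\int_{\omega_-}f\,dx=M$.

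Setting $V_1:=\sigma_++\sigma_-=\tfrac{1}{M}(f\chi_{\omega_+}-f\chi_{\omega_-})$, admissibility ($V_1\ge 0$ and $\int_\O V_1\,dx=1$) is immediate from the previous paragraph. A direct check gives $V_1 u_1=f\chi_{\omega_+\cup\omega_-}$ almost everywhere, and combining with the EL yields $-\Delta u_1+V_1 u_1=f$ in $\mathcal D'(\O)$; hence $u_1$ is the unique minimizer of $J_{V_1}$ and $\E_f(V_1)=J_{V_1}(u_1)=J_1(u_1)$, using $\int V_1 u_1^2\,dx=M^2$. For any admissible $V$ one estimates
\begin{equation*}
\E_f(V)\le J_V(u_1)=\tfrac12\int|\nabla u_1|^2\,dx+\tfrac12\int V u_1^2\,dx-\int fu_1\,dx\le J_1(u_1)=\E_f(V_1),
\end{equation*}
using $\int V u_1^2\,dx\le M^2\int V\,dx\le M^2$; thus $V_1$ realizes the maximum, and its uniqueness follows from that of $u_1$ together with the explicit formula. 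The main obstacle is the rigorous derivation of the EL with the non-smooth term $\Phi$: the space $H^1_0\cap L^\infty$ is not reflexive and $\sigma$ is only a signed Radon measure a priori, so promoting it to an absolutely continuous measure with the claimed density requires simultaneously the $H^2_{loc}$ estimate from Remark \ref{unifest} and the plateau identity $\nabla u_1=0$ a.e.\ on $\{|u_1|=M\}$.
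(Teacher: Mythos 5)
Your overall architecture coincides with the paper's: pass to the limit $p\to1$ using Remark \ref{unifest} and Lemma \ref{sc} to get a unique minimizer $u_1\in H^2_{loc}(\O)\cap H^1_0(\O)$, use the plateau identity $\Delta u_1=0$ a.e.\ on $\{|u_1|=M\}$ to identify the extra term in the Euler--Lagrange equation with $f\chi_{\omega_+\cup\omega_-}$, and close with the two-sided estimate $\E_f(V)\le J_V(u_1)\le J_1(u_1)=\E_f(V_1)$. The one step where you diverge is also the one where there is a genuine gap, and you flag it yourself: the derivation of the optimality condition $0\in\partial J_1(u_1)$ in the form $-\Delta u_1+M\sigma=f$ with $\sigma$ a signed Radon measure satisfying $\sigma_\pm\ge0$, $\mathrm{supp}(\sigma_\pm)\subset\omega_\pm$ and $\sigma_+(\O)+\sigma_-(\O)=1$. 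As written this is asserted, not proved: the sum rule $\partial(J_0+\Phi)\subset\partial J_0+\partial\Phi$ does not apply off the shelf because for $d\ge2$ the effective domain $H^1_0(\O)\cap L^\infty(\O)$ of $\Phi(u)=\frac12\|u\|_\infty^2$ has empty interior in $H^1_0(\O)$, so $\Phi$ is nowhere continuous there; moreover a subgradient of $\Phi$ a priori lives in $H^{-1}(\O)$ and need not be a Radon measure, and the ``unit mass concentrated on the contact set'' description belongs to the duality with $C(\overline\O)$ or $L^1$, not with $H^1_0$. Since the sign conditions $f\ge0$ on $\omega_+$, $f\le0$ on $\omega_-$ and the identity $\int_{\omega_+}f\,dx-\int_{\omega_-}f\,dx=M$ (hence the admissibility $V_1\ge0$, $\int_\O V_1\,dx=1$) are all extracted from the properties of $\sigma$, the conclusion currently rests on an unproved characterization.

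The paper avoids this entirely by working with concrete one-sided variations: it tests $J_1$ with $\varphi=\psi w_\eps$, where $w_\eps$ is the torsion function of the quasi-open set $\{|u_1|<M-\eps\}$, so that $u_1+t\varphi$ does not increase the sup-norm for small $t>0$; this yields $-\Delta u_1=f$ on $\{|u_1|<M\}$ and, with nonnegative $\psi$, the inequality $-\Delta u_1-f\ge0$ on $\{u_1<M-\eps\}$, from which the sign conditions on $f$ over $\omega_\pm$ follow using $\Delta u_1=0$ there. The mass identity is then obtained not from a total-variation count but from the elementary scaling variation $\frac{d}{d\eps}J_1\big((1+\eps)u_1\big)\big|_{\eps=0}=0$, which gives $\int_\O|\nabla u_1|^2\,dx+M^2=\int_\O fu_1\,dx$ and hence $M=\int_{\omega_+}f\,dx-\int_{\omega_-}f\,dx$. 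If you replace your abstract subdifferential step by these explicit variations (or, equivalently, prove the measure representation of $f+\Delta u_1$ by testing with $\varphi\in C^\infty_c$ and the bound $L(\varphi)\le M\|\varphi\|_\infty$, then localize with the $w_\eps$ trick), the rest of your argument goes through as written.
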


\begin{proof}
For any $u\in H^1_0(\O)$ and any $V\ge0$ with $\int_\O V\,dx\le1$ we have
$$\int_\O u^2V\,dx\le\|u\|_\infty^2\int_\O V\,dx\le \|u\|_\infty^2,$$
where for sake of simplicity, we write $\|\cdot\|_\infty$ instead of $\|\cdot\|_{L^\infty(\O)}$. Arguing as in the proof of Proposition \ref{maxex}, we obtain the inequalities
\begin{align*}
&\frac12\int_\O|\nabla u|^2\,dx+\frac12\int_\O u^2V\,dx-\int_\O uf\,dx\,\le J_1(u),\\
&\max\left\{\E_f(V)\ :\ \int_\O V\le1\right\}\le\min\left\{J_1(u)\ :\ u\in H^1_0(\O)\right\}.
\end{align*}
As in \eqref{apriori}, we have that a minimizing sequence of $J_1$ is bounded in $H^1_0(\O)\cap L^\infty(\O)$ and thus by semicontinuity there is a minimizer $u_1\in H^1_0(\O)\cap L^\infty(\O)$ of $J_1$, which is also unique, by the strict convexity of $J_1$. Let $u_p$ denotes the minimizer of $J_p$ as in Proposition \ref{maxex}. Then, by Remark \ref{unifest}, we have that the family $u_p$ is bounded in $H^1_0(\O)$ and in $H^2(\O')$ for each $\O'\subset\subset\O$. Then, we have that each sequence $u_{p_n}$ has a subsequence converging weakly in $L^2(\O)$ to some $u\in H^2_{loc}(\O)\cap H^1_0(\O)$. By Lemma \ref{sc}, we have $u=u_1$ and so, $u_1\in H^2_{loc}(\O)\cap H^1_0(\O)$. Thus $u_{p_n}\to u_1$ in $L^2(\O)$.

Let us define $M=\|u_1\|_\infty$ and $\omega=\omega_+\cup\omega_-$. We claim that $u_1$ satisfies, on $\O$ the PDE
\be\label{maxone1}
-\Delta u+\chi_\omega f=f.
\ee
Indeed, setting $\O_t=\O\cap\{|u|<t\}$ for $t>0$, we compute the variation of $J_1$ with respect to any function $\varphi\in H^1_0(\O_{M-\eps})$. Namely we consider functions of the form $\varphi=\psi w_\eps$ where $w_\eps$ is the solution of $-\Delta w_\eps=1$ on $\O_{M-\eps}$, and $w_\eps=0$ on $\partial\O_{M-\eps}$. Thus we obtain that $-\Delta u_1=f$ on $\O_{M-\eps}$ and letting $\eps\to0$ we conclude, thanks to the Monotone Convergence Theorem, that
$$-\Delta u_1=f\qquad\hbox{on }\O_M=\O\setminus\omega.$$
Moreover, since $u_1\in H^2_{loc}(\O)$, we have that $\Delta u_1=0$ on $\omega$ and so, we obtain \eqref{maxone1}.\\
Since $u_1$ is the minimizer of $J_1$, we have that for each $\eps\in\R$, $J_1((1+\eps)u_1)-J_1(u_1)\ge0$. Taking the derivative of this difference at $\eps=0$, we obtain
\be\label{maxone2}
\int_\O|\nabla u_1|^2\,dx+M^2=\int_\O fu_1\,dx.
\ee
By \eqref{maxone1}, we have $\int_\O|\nabla u_1|^2\,dx=\int_{\O\setminus\omega}fu_1\,dx$ and so
\be\label{maxone2}
M=\int_{\omega_+}f\,dx-\int_{\omega_-}f\,dx.
\ee
Setting $V_1:=\frac1M \left(\chi_{\omega_+} f-\chi_{\omega_-}f\right)$, we have that $\int_\O V_1\,dx=1$, $-\Delta u_1+V_1u_1=f$ in $H^{-1}(\O)$ and 
$$J_1(u_1)=\frac12\int_\O|\nabla u_1|^2\,dx+\frac12\int_\O u_1^2V_1\,dx-\int_\O u_1f\,dx.$$
We are left to prove that $V_1$ is admissible, i.e. $V_1\ge0$. To do this, consider $w_\eps$ the energy function of the quasi-open set $\{u<M-\eps\}$ and let $\varphi=w_\eps\psi$ where $\psi\in C^\infty_c(\R^d)$, $\psi\ge0$. Since $\varphi\ge0$, we get that
$$0\le\lim_{t\to0^+}\frac{J_1(u_1+t\varphi)-J_1(u_1)}{t}=\int_\O\langle\nabla u_1,\nabla\varphi\rangle\,dx-\int_\O f\varphi\,dx.$$
This inequality holds for any $\psi$ so that, integrating by parts, we obtain
$$-\Delta u_1-f\ge0$$
almost everywhere on $\{u_1<M-\eps\}$. In particular, since $\Delta u_1=0$ almost everywhere on $\omega_-=\{u=-M\}$, we obtain that $f\le0$ on $\omega_-$. Arguing in the same way, and considering test functions supported on $\{u_1\ge-M+\eps\}$, we can prove that $f\ge0$ on $\omega_+$. This implies $V_1\ge0$ as required.
\end{proof}

\begin{oss}
Under some additional assumptions on $\O$ and $f$ one can obtain some more precise regularity results for $u_1$. In fact, in \cite[Theorem A1]{egnell} it was proved that if $\partial\O\in C^2$ and if $f\in L^\infty(\O)$ is positive, then $u_1\in C^{1,1}(\overline\O)$.
\end{oss}

\begin{oss}\label{controesempio}
In the case $p<1$ problem \eqref{maxpb} does not admit, in general, a solution, even for regular $f$ and $\O$. We give a counterexample in dimension one, which can be easily adapted to higher dimensions. 

Let $\O=(0,1)$, $f=1$, and let $x_{n,k}=k/n$ for any $n\in\N$ and $k=1,\dots,n-1$. We define the (capacitary) measures
$$\mu_n=\sum_{k=1}^{n-1} +\infty\, \delta_{x_{n,k}},$$
where $\delta_x$ is the Dirac measure at the point $x$. Let $w_n$ be the minimizer of the functional $J_{\mu_n}(1,\cdot)$, defined in \eqref{F}. Then $w_n$ vanishes at $x_{n,k}$, for $k=1,\dots,n-1$, and so we have
$$\E(\mu_n)=n\,\min\left\{\frac12\int_0^{1/n}|u'|^2\,dx-\int_0^{1/n}u\,dx\ :\ u\in H^1_0(0,1/n)\right\}=-\frac{C}{n^2},$$
where $C>0$ is a constant.

For any fixed $n$ and $j$, let $V_j^n$ be the sequence of positive functions such that $\int_0^1|V_j^n|^p\,dx=1$, defined by
\be\label{esempioVjn}
V_j^n=C_n\sum_{k=1}^{n-1}j^{1/p}\chi_{\left[\frac{k}{n}-\frac1j,\frac{k}{n}+\frac1j\right]}<\sum_{k=1}^{n-1}I_{\left[\frac{k}{n}-\frac1j,\frac{k}{n}+\frac1j\right]},
\ee
where $C_n$ is a constant depending on $n$ and $I$ is as in \eqref{Iomega}.
By the compactness of the $\gamma$-convergence, we have that, up to a subsequence, $V_j^n\,dx$ $\gamma$-converges to some capacitary measure $\mu$ as $j\to\infty$. On the other hand it is easy to check that $\sum_{k=1}^{n-1}I_{\left[\frac{k}{n}-\frac1j,\frac{k}{n}+\frac1j\right]}(x)$ $\gamma$-converges to $\mu_n$ as $j\to\infty$. By \eqref{esempioVjn}, we have that $\mu\le\mu_n$. In order to show that $\mu=\mu_n$ it is enough to check that each nonnegative function $u\in H^1_0((0,1))$, for which $\int u^2\,d\mu<+\infty$, vanishes at $x_{n,k}$ for $k=1,\dots,n-1$. Suppose that $u(k/n)>0$. By the definition of the $\gamma$-convergence, there is a sequence $u_j\in H^1_0(\O)=H^1_{V_j^n}(\O)$ such that $u_j\to u$ weakly in $H^1_0(\O)$ and $\int u_j^2 V_j^n\,dx\le C$, for some constant $C$ not depending on $j\in\N$. Since $u_j$ are uniformly $1/2$-H\"older continuous, we can suppose that $u_j\ge\eps>0$ on some interval $I$ containing $k/n$. But then for $j$ large enough $I$ contains $[k/n-1/j,k/n+1/j]$ so that
$$C\ge\int_0^1u_j^2V_j^n\,dx\ge\int_{k/n-1/j}^{k/n+1/j}u_j^2V_j^n\,dx\ge2C_n\eps^2j^{1/p-1},$$
which is a contradiction for $p<1$. Thus, we have that $\mu=\mu_n$ and so $V_j^n$ $\gamma$-converges to $\mu_n$ as $j\to\infty$. In particular, $\E(\mu_n)=\lim_{j\to\infty}\E_1(V^n_j)$ and since the left-hand side converges to zero as $n\to\infty$, we can choose a diagonal sequence $V^n_{j_n}$ such that $\E(V^n_{j_n})\to 0$ as $n\to\infty$. Since there is no admissible functional $V$ such that $\E_1(V)=0$, we have the conclusion.
\end{oss}

\section{Existence of optimal potentials for unbounded constraints}\label{s4}

In this section we consider the optimization problem
\be\label{popPhi}
\min\left\{F(V)\ :\ V\in\V\right\},
\ee
where $\V$ is an admissible class of nonnegative Borel functions on the bounded open set $\O\subset\R^d$ and $F$ is a cost functional on the family of capacitary measures $\M^+_{\cp}(\O)$. The admissible classes we study depend on a function $\Psi:[0,+\infty]\to[0,+\infty]$
$$\V=\left\{V:\O\to[0,+\infty]\ :\ V\hbox{ Lebesgue measurable, }\int_\O\Psi(V)\,dx\le1\right\}.$$

\begin{teo}\label{mainPhi}
Let $\O\subset\R^d$ be a bounded open set and $\Psi:[0,+\infty]\to[0,+\infty]$ a strictly decreasing function with $\Psi^{-1}$ convex. Then, for any functional $F:\M^+_{\cp}(\O)\to\R$ which is increasing and lower semicontinuous with respect to the $\gamma$-convergence, the problem \eqref{popPhi} has a solution. 
\end{teo}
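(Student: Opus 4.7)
The plan is to work with a minimizing sequence $V_n\in\V$ and to extract two parallel subsequential limits: the $\gamma$-limit $\mu$ of $V_n\,dx$ as capacitary measures, and the weak-$*$ limit $\sigma$ of the Radon measures $\Psi(V_n)\,dx$. The candidate optimal potential $V$ will be built from the Lebesgue-absolutely-continuous part of $\sigma$, and the argument will be closed by showing $V\in\V$ and $V\,dx\le\mu$, combined with the monotonicity of $F$.

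\medskip

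Compactness of $(\M^+_{\cp}(\O),d_\gamma)$ recalled in Remark~\ref{s2r2} lets me assume, up to a subsequence, $V_n\,dx\to_\gamma\mu$ for some $\mu\in\M^+_{\cp}(\O)$, so that $F(\mu)\le\liminf_n F(V_n\,dx)=\inf_\V F$ by the $\gamma$-lower semicontinuity. Setting $W_n:=\Psi(V_n)$, the bound $\int_\O W_n\,dx\le 1$ yields weak-$*$ precompactness of $(W_n\,dx)$ in $\M_+(\overline\O)$: up to a further subsequence, $W_n\,dx\to^*\sigma$ with $\sigma(\overline\O)\le 1$. Writing the Lebesgue decomposition $\sigma=W\,dx+\sigma^s$ with $W\in L^1_+(\O)$ and setting $V:=\Psi^{-1}(W)$ (with the convention $\Psi^{-1}(0):=+\infty$), one has $\int_\O\Psi(V)\,dx=\int_\O W\,dx\le 1$, so $V\in\V$.

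\medskip

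The key step is to prove $V\,dx\le\mu$ as capacitary measures; from there, monotonicity of $F$ gives $F(V\,dx)\le F(\mu)\le\inf_\V F$ and $V$ is optimal. For a fixed $u\in C^\infty_c(\O)$, the $\Gamma$-limsup clause of Definition~\ref{gamma} provides a recovery sequence $u_n\to u$ in $L^2(\O)$ along which
$$\int|\nabla u_n|^2\,dx+\int u_n^2\,V_n\,dx\ \longrightarrow\ \int|\nabla u|^2\,dx+\int u^2\,d\mu,$$
and weak $L^2$-lower semicontinuity of the Dirichlet integral forces $\limsup_n\int u_n^2 V_n\,dx\le\int u^2\,d\mu$. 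After truncation we may take $u_n$ uniformly bounded in $L^\infty$. The Chacon biting lemma applied to $(W_n)$ produces sets $E_k\nearrow\O$ with $W_n\mathbf 1_{E_k}\rightharpoonup W\mathbf 1_{E_k}$ weakly in $L^1$ for each $k$; the convexity of $\Psi^{-1}$ together with Ioffe's weak-strong lower semicontinuity theorem for convex integrands then gives
$$\int_{E_k} u^2\,V\,dx=\int_{E_k} u^2\,\Psi^{-1}(W)\,dx\le\liminf_n\int_{E_k} u_n^2\,\Psi^{-1}(W_n)\,dx\le\liminf_n\int_\O u_n^2\,V_n\,dx.$$
Passing $k\to\infty$ by monotone convergence and combining with the $\limsup$ bound yields $\int u^2\,V\,dx\le\int u^2\,d\mu$, and the density of $C^\infty_c(\O)$ in $H^1_\mu$ extends this to all admissible test functions, whence $V\,dx\le\mu$.

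\medskip

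The main obstacle in the plan is this key step: one must transport the convex inversion $V=\Psi^{-1}(W)$ through limits where $u_n\to u$ strongly in $L^2$ while $W_n\rightharpoonup W$ only weakly in $L^1$ on the biting sets $E_k$, and where $V_n$ itself need not be bounded in any Lebesgue space. The combined use of $L^\infty$-truncation of the recovery sequence and of Ioffe-type semicontinuity is the tool designed for exactly this situation, and it allows the singular part $\sigma^s$ (which records the subregions where $V_n\to+\infty$, and which the $\gamma$-limit $\mu$ sees as an infinite contribution) to be safely discarded when constructing the Lebesgue-dominated competitor $V$.
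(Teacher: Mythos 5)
Your argument is essentially the paper's own proof: extract the weak* limit $\sigma$ of $\Psi(V_n)\,dx$, set $V=\Psi^{-1}(\sigma_a)$, and establish $V\,dx\le\mu$ via recovery sequences together with convex lower semicontinuity, then conclude by monotonicity of $F$ --- the only difference being that the paper invokes the strong--weak* lower semicontinuity theorem for convex integrands directly (which handles the singular part of $\sigma$ in one stroke), where you route through the biting lemma and Ioffe's theorem. One small caution: the biting limit of $W_n$ need not coincide with the absolutely continuous part $W$ of $\sigma$ (it can be strictly smaller when mass concentrates on sets of vanishing measure), but since the biting limit is always $\le W$ a.e.\ and $\Psi^{-1}$ is decreasing, your chain of inequalities survives unchanged.
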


\begin{proof}
Let $V_n\in\V$ be a minimizing sequence for problem \eqref{popPhi}. Then, $v_n:=\Psi(V_n)$ is a bounded sequence in $L^1(\O)$ and so, up to a subsequence, $v_n$ converges weakly* to some measure $\nu$. We will prove that $V:=\Psi^{-1}(\nu_a)$ is a solution of \eqref{popPhi}, where $\nu_a$ denotes the density of the absolutely continuous part of $\nu$ with respect to the Lebesgue measure. Clearly $V\in\V$ and so it remains to prove that $F(V)\le\liminf_n F(V_n)$. In view of Remark \ref{s2r2}, we can suppose that, up to a subsequence, $V_n$ $\gamma$-converges to a capacitary measure $\mu\in\M^+_{\cp}(\O)$. We claim that the following inequalities hold true: 
\be\label{th1}
F(V)\le F(\mu)\le\liminf_{n\to\infty} F(V_n).
\ee
In fact, the second inequality in \eqref{th1} is the lower semicontinuity of $F$ with respect to the $\gamma$-convergence, while the first needs a more careful examination. By the definition of $\gamma$-convergence, we have that for any $u\in H^1_0(\O)$, there is a sequence $u_n\in H^1_0(\O)$ which converges to $u$ in $L^2(\O)$ and is such that
\begin{align}
\int_\O|\nabla u|^2dx+\int_\O u^2d\mu
&=\lim_{n\to\infty}\int_\O|\nabla u_n|^2dx+\int_\O u_n^2 V_n\,dx\nonumber\\
&=\lim_{n\to\infty}\int_\O|\nabla u_n|^2dx+\int_\O u_n^2\Psi^{-1}(v_n)\,dx\label{ineqth2}\\
&\ge\int_\O|\nabla u|^2dx+\int_\O u^2\Psi^{-1}(\nu_a)\,dx\nonumber\\
&=\int_\O|\nabla u|^2dx+\int_\O u^2V\,dx,\nonumber
\end{align}
where the inequality in \eqref{ineqth2} is due to strong-weak* lower semicontinuity of integral functionals (see for instance \cite{busc}). Thus, for any $u\in H^1_0(\O)$, we have 
$$\int_\O u^2\,d\mu\ge\int_\O u^2V\,dx,$$
which gives $V\le\mu$. Since $F$ is increasing, we obtain the first inequality in \eqref{th1} and so the conclusion.
\end{proof}

\begin{oss}
The condition on the function $\Psi$ in Theorem \ref{mainPhi} is satisfied for instance by the following functions:
\begin{enumerate}
\item $\Psi(x)=x^{-p}$, for any $p>0$;
\item $\Psi(x)=e^{-\alpha x}$, for any $\alpha>0$.
\end{enumerate}
\end{oss}

\subsection{Optimal potentials for the Dirichlet Energy and the first eigenvalue of the Dirichlet Laplacian}\label{s41}

In some special cases, the solution of the optimization problem \eqref{popPhi} can be computed explicitly through the solution of some PDE, as in Subsection \ref{s31}. This occurs for instance when $F=\lambda_1$ or when $F=\E_f$, with $f\in L^2(\O)$. We note that, by the variational formulation
\be
\lambda_1(V)=\min\left\{\int_\O|\nabla u|^2dx+\int_\O u^2V\,dx\ :\ u\in H^1_0(\O),\ \int_\O u^2\,dx=1\right\},
\ee
we can rewrite problem \eqref{popPhi} as
\be\label{op3}
\begin{array}{ll}
\ds\min\left\{\min_{\|u\|_2=1}\Big\{\int_\O|\nabla u|^2\,dx+\int_\O u^2V\,dx\Big\}\ :\ V\ge0,\ \int_\O\Psi(V)\,dx\le1\right\}\\
\ds\qquad\qquad=\min_{\|u\|_2=1}\left\{\min\Big\{\int_\O|\nabla u|^2\,dx+\int_\O u^2V\,dx\ :\ V\ge0,\ \int_\O\Psi(V)\,dx\le1\Big\}\right\}.
\end{array}
\ee
One can compute that, if $\Psi$ is differentiable with $\Psi'$ invertible, then  the second minimum in \eqref{op3} is achieved for
\be\label{vopt}
V=(\Psi')^{-1}(\Lambda_u u^2),
\ee
where $\Lambda_u$ is a constant such that $\int_\O\Psi\left((\Psi')^{-1}(\Lambda_u u^2)\right)\,dx=1$. Thus, the solution of the problem on the right hand side of \eqref{op3} is given through the solution of
\be\label{gs1}
\min\left\{\int_\O|\nabla u|^2\,dx+\int_\O u^2 (\Psi')^{-1}(\Lambda_u u^2)\,dx:\ u\in H^1_0(\O),\ \int_\O u^2\,dx=1\right\}.
\ee
Analogously, we obtain that the optimal potential for the Dirichlet Energy $\E_f$ is given by \eqref{vopt}, where this time $u$ is a solution of 
\be\label{gs2}
\min\left\{\int_\O\frac12|\nabla u|^2\,dx+\int_\O\frac12u^2(\Psi')^{-1}(\Lambda_u u^2)\,dx-\int_\O fu\,dx\ :\ u\in H^1_0(\O)\right\}.
\ee 
Thus we obtain the following result.

\begin{cor}\label{lb}
Under the assumptions of Theorem \ref{mainPhi}, for the functionals $F=\lambda_1$ and $F=\E_f$ there exists a solution of \eqref{popPhi} given by $V=(\Psi')^{-1}(\Lambda_u u^2)$, where $u\in H^1_0(\O)$ is a minimizer of \eqref{gs1}, in the case $F=\lambda_1$, and of \eqref{gs2}, in the case $F=\E_f$.
\end{cor}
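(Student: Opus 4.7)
My plan is to first invoke Theorem \ref{mainPhi} to obtain an optimal $V^*\in\V$, and then identify its explicit form by interchanging the infima in $u$ and $V$ and applying Lagrange multipliers to the resulting inner problem, exactly as sketched in \eqref{op3}--\eqref{gs2}. To apply Theorem \ref{mainPhi} I would verify the two hypotheses for $F=\lambda_1$ and $F=\E_f$. Monotonicity is immediate from the variational formulas
\begin{equation*}
\lambda_1(\mu)=\inf_{\|u\|_2=1}\Big\{\int_\O|\nabla u|^2\,dx+\int_\O u^2\,d\mu\Big\},\qquad \E_f(\mu)=\inf_{u\in H^1_0(\O)}J_\mu(f,u),
\end{equation*}
since a larger $\mu$ raises both integrands pointwise and hence their infima. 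Semicontinuity under $\gamma$-convergence is in fact continuity: by Remark \ref{convres}, if $\mu_n$ $\gamma$-converges to $\mu$ then the resolvents $\mathcal{R}_{\mu_n}$ converge strongly in $L^2$ to $\mathcal{R}_\mu$, so the spectrum (in particular $\lambda_1$) converges pointwise, while $\E_f(\mu)=-\tfrac12\int_\O f\,\mathcal{R}_\mu(f)\,dx$ is a continuous function of $\mu$. Theorem \ref{mainPhi} therefore produces the desired $V^*$.

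To identify the explicit form I would write each problem as a double infimum over $\V\times H^1_0(\O)$ (with the sphere constraint in the $\lambda_1$ case) and swap the order: no minimax theorem is needed, since one is merely reordering an infimum, which gives \eqref{op3} and its analogue for $\E_f$. For $u$ fixed, the inner problem reduces to the linear minimization
\begin{equation*}
\min\Big\{\int_\O u^2 V\,dx\ :\ V\ge 0,\ \int_\O \Psi(V)\,dx\le 1\Big\}.
\end{equation*}
The substitution $W=\Psi(V)$, i.e.\ $V=\Psi^{-1}(W)$, recasts it as
\begin{equation*}
\min\Big\{\int_\O u^2\,\Psi^{-1}(W)\,dx\ :\ W\ge 0,\ \int_\O W\,dx\le 1\Big\},
\end{equation*}
which is convex because $\Psi^{-1}$ is convex by hypothesis, and the constraint is saturated because $\Psi^{-1}$ is strictly decreasing. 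The KKT stationarity condition $u^2=-\Lambda\,\Psi'(V)$ is then both necessary and sufficient; inverting $\Psi'$ and absorbing the sign into the multiplier yields $V=(\Psi')^{-1}(\Lambda_u u^2)$, with $\Lambda_u$ fixed by the relation $\int_\O \Psi\big((\Psi')^{-1}(\Lambda_u u^2)\big)\,dx=1$. Plugging this $V=V(u)$ into the outer functional produces \eqref{gs1} for $F=\lambda_1$ and \eqref{gs2} for $F=\E_f$; a minimizer $u$ of either reduced problem is the principal eigenfunction, respectively the energy minimizer, associated with the optimal $V^*$ produced above.

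The delicate point is the Lagrange multiplier step: one must ensure that the candidate $V(u)$ is actually a minimizer of the inner problem and not merely a critical point. The change of variables $W=\Psi(V)$ is precisely what makes this rigorous, converting the a priori non-convex constraint $\int_\O \Psi(V)\,dx\le 1$ into the convex slice $\{W\ge 0,\ \int_\O W\,dx\le 1\}$ of the nonnegative cone in $L^1$. A minor technical detail is the treatment of the set $\{u=0\}$, where $V(u)=(\Psi')^{-1}(0)$ may equal $+\infty$; this is still admissible provided $\Psi(+\infty)=0$, which is the case for the two examples $\Psi(x)=x^{-p}$ and $\Psi(x)=e^{-\alpha x}$ listed after Theorem \ref{mainPhi}.
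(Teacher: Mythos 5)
Your proposal is correct and follows essentially the same route as the paper: the corollary is obtained exactly from the computation in Subsection \ref{s41}, i.e.\ swapping the two minimizations as in \eqref{op3}, solving the inner problem in $V$ by a Lagrange multiplier to get \eqref{vopt}, and reducing to \eqref{gs1} or \eqref{gs2}. The extra details you supply --- checking that $\lambda_1$ and $\E_f$ are increasing and $\gamma$-continuous so that Theorem \ref{mainPhi} applies, and the substitution $W=\Psi(V)$ that turns the inner problem into a convex one so the critical point is genuinely a minimizer --- are correct rigorizations of steps the paper carries out only formally.
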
 

\begin{exam}
If $\Psi(x)=x^{-p}$ with $p>0$, the optimal potentials for $\lambda_1$ and $\E_f$ are given by
\be\label{vopt2}
V=\left(\int_\O|u|^{2p/(p+1)}\,dx\right)^{1/p}u^{-2/(p+1)},
\ee 
where $u$ is the minimizer of \eqref{gs1} and \eqref{gs2}, respectively. We also note that, in this case
$$\int_\O u^2(\Psi')^{-1}(\Lambda_u u^2)\,dx=\left(\int_\O|u|^{2p/(p+1)}\,dx\right)^{(1+p)/p}.$$
\end{exam}

\begin{exam}
If $\Psi(x)=e^{-\alpha x}$ with $\alpha>0$, the optimal potentials for $\lambda_1$ and $\E_f$ are given by
\be\label{vopt2}
V=\frac{1}{\alpha}\left(\log\left(\int_\O u^2\,dx\right)-\log\left(u^2\right)\right),
\ee 
where $u$ is the minimizer of \eqref{gs1} and \eqref{gs2}, respectively.
We also note that, in this case
$$\int_\O u^2 (\Psi')^{-1}(\Lambda_u u^2)\,dx=\frac{1}{\alpha}\left(\int_\O u^2\,dx \int_\O\log\left(u^2\right)\,dx-\int_\O u^2\log\left(u^2\right)\,dx\right).$$
\end{exam}

\section{Optimization problems in unbounded domains}\label{s5}

In this section we consider optimization problems for which the domain region is the entire Euclidean space $\R^d$. General existence results, in the case when the design region $\Omega$ is unbounded, are hard to achieve since most of the cost functionals are not semicontinuous with respect to the $\gamma$-convergence in these domains. For example, it is not hard to check that if $\mu$ is a capacitary measure, infinite outside the unit ball $B_1$, then, for every $x_n\to\infty$, the sequence of translated measures $\mu_n=\mu(\cdot+x_n)$ $\gamma$-converges to the capacitary measure
$$I_\emptyset(E)=\begin{cases}
0,&\hbox{if }\cp(E)=0,\\
+\infty,&\hbox{if }\cp(E)>0.
\end{cases}$$
Thus increasing and translation invariant functionals are never lower semicontinuous with respect to the $\gamma$-convergence. In some special cases, as the Dirichlet Energy or the first eigenvalue of the Dirichlet Laplacian, one can obtain existence results by more direct methods, as those in Proposition \ref{maxex}.

For a potential $V\ge0$ and a function $f\in L^q(\R^d)$, we define the Dirichlet energy as
\be\label{energyrd}
\E_f(V)=\inf\left\{\int_{\R^d}\Big(\frac12|\nabla u|^2+\frac12V(x)u^2-f(x)u\Big)\,dx\ :\ u\in C^\infty_c(\R^d)\right\}.
\ee
In some cases it is convenient to work with the space $\dt{H}^1(\R^d)$, obtained as the closure of $C^\infty_c(\R^d)$ with respect to the $L^2$ norm of the gradient, instead of the classical Sobolev space $H^1(\R^d)$. We recall that if $d\ge 3$, the Gagliardo-Nirenberg-Sobolev inequality
\be\label{gnsd3}
\|u\|_{L^{2d/(d-2)}}\le C_d\|\nabla u\|_{L^2},\qquad\forall u\in \dt{H}^1(\R^d),
\ee
holds, while in the cases $d\le 2$, we have respectively
\begin{align}
&\|u\|_{L^\infty}\le\left(\frac{r+2}{2}\right)^{2/(r+2)}\|u\|_{L^r}^{r/(r+2)}\|u'\|_{L^2}^{2/(r+2)},\qquad\forall r\ge1,\ \forall u\in\dt{H}^1(\R);\label{gnsd1}\\
&\|u\|_{L^{r+2}}\le\left(\frac{r+2}{2}\right)^{2/(r+2)}\|u\|_{L^r}^{r/(r+2)}\|\nabla u\|_{L^2}^{2/(r+2)},\qquad\forall r\ge1,\ \forall u\in\dt{H}^1(\R^2).\label{gnsd2}
\end{align}

\subsection{Optimal potentials in $L^p(\R^d)$}

In this section we consider optimization problems for the Dirichlet energy $\E_f$ among potentials $V\ge0$ satisfying a constraint of the form $\|V\|_{L^p}\le1$. We note that the results in this section hold in a generic unbounded domain $\O$. Nevertheless, for sake of  simplicity, we restrict our attention to the case $\O=\R^d$.

\begin{prop}\label{pinterv}
Let $p>1$ and let $q$ be in the interval with end-points $a=2p/(p+1)$ and $b=\max\{1,2d/(d+2)\}$ (with $a$ included for every $d\ge1$, and $b$ included for every $d\ne2$). Then, for every $f\in L^q(\R^d)$, there is a unique solution of the problem
\be\label{maxrd}
\max\left\{\E_f(V)\ :\ V\ge0,\ \int_{\R^d}V^p\,dx\le1\right\}.
\ee 
\end{prop}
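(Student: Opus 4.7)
The plan is to mirror the proof of Proposition~\ref{maxex}: dualize the problem by exchanging $\max_V$ with $\min_u$, reduce to minimizing an auxiliary scalar functional on a Sobolev--type space, and read off the optimal potential from its Euler--Lagrange equation. For any $u\in\dt{H}^1(\R^d)\cap L^{2p/(p-1)}(\R^d)$, H\"older's inequality gives
$$\sup\Big\{\int_{\R^d}u^2V\,dx\ :\ V\ge0,\ \int_{\R^d}V^p\,dx\le1\Big\}=\Big(\int_{\R^d}|u|^{2p/(p-1)}\,dx\Big)^{(p-1)/p},$$
the supremum being uniquely attained at $V_u=\|u\|_{L^{2p/(p-1)}}^{-2/p}|u|^{2/(p-1)}$. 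Swapping $\inf$ and $\sup$ yields
$$\max\big\{\E_f(V)\ :\ V\ge0,\ \|V\|_{L^p}\le1\big\}\le\inf\big\{J_p(u)\ :\ u\in\dt{H}^1(\R^d)\cap L^{2p/(p-1)}(\R^d)\big\},$$
with $J_p$ as in \eqref{Ja}. The central task is to show that $J_p$ admits a unique minimizer $u_p$ on $X:=\dt{H}^1(\R^d)\cap L^{2p/(p-1)}(\R^d)$; the optimal potential $V_p=V_{u_p}$ will then be recovered via the Euler--Lagrange equation exactly as in the bounded case.

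The main obstacle is the coercivity of $J_p$ on $X$, which is precisely where the hypothesis on $q$ enters. One has to control $|\int fu|$ by a small multiple of $\|\nabla u\|_{L^2}^2+\|u\|_{L^{2p/(p-1)}}^2$ plus a constant depending on $\|f\|_{L^q}$. At the endpoint $q=a=2p/(p+1)$, H\"older gives $|\int fu|\le\|f\|_{L^a}\|u\|_{L^{2p/(p-1)}}$, which Young absorbs. At the opposite endpoint $q=b$, H\"older yields $|\int fu|\le\|f\|_{L^b}\|u\|_{L^{b'}}$, and then the Sobolev inequality \eqref{gnsd3} for $d\ge3$, or the Gagliardo--Nirenberg inequalities \eqref{gnsd1}--\eqref{gnsd2} for $d=1$, controls $\|u\|_{L^{b'}}$ by $\|\nabla u\|_{L^2}$ times a power of $\|u\|_{L^{2p/(p-1)}}$; once again a weighted Young's inequality absorbs the result. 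For $q$ in the interior of the interval, the interpolation $\|u\|_{L^{q'}}\le\|u\|_{L^{2p/(p-1)}}^\theta\|u\|_{L^{b'}}^{1-\theta}$ combined with the two endpoint estimates does the job. The exclusion of $b$ when $d=2$ reflects the failure of $\dt{H}^1(\R^2)\hookrightarrow L^\infty$: no inequality of the required form is available at $q=1$ in this borderline case.

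Once coercivity is in place, a standard direct-method argument closes the proof. A minimizing sequence $(u_n)$ is bounded in $X$ and, up to a subsequence, $u_n\rightharpoonup u_p$ weakly both in $\dt{H}^1(\R^d)$ and in $L^{2p/(p-1)}(\R^d)$. Weak lower semicontinuity of $\|\nabla\cdot\|_{L^2}$ and of $\|\cdot\|_{L^{2p/(p-1)}}$, together with the continuity of $u\mapsto\int fu$ along these weakly convergent sequences (which follows from the same embeddings used for coercivity, by duality), gives $J_p(u_p)\le\liminf_n J_p(u_n)$, so $u_p$ is a minimizer; strict convexity of $J_p$ yields uniqueness. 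The Euler--Lagrange equation for $u_p$ then reads
$$-\Delta u_p+V_p u_p=f\quad\text{in }\R^d,\qquad V_p=\Big(\int_{\R^d}|u_p|^{2p/(p-1)}\,dx\Big)^{-1/p}|u_p|^{2/(p-1)},$$
and a direct computation checks $\int V_p^p\,dx=1$. Pairing the equation with $u_p$ gives $J_p(u_p)=\E_f(V_p)$, which forces equality in the displayed max-min bound and shows that $V_p$ solves \eqref{maxrd}; uniqueness of $V_p$ follows from uniqueness of $u_p$ together with the equality case in H\"older's inequality.
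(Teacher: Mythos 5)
Your proposal follows the same route as the paper: reduce \eqref{maxrd} to the minimization of $J_p$ over $\dt{H}^1(\R^d)\cap L^{2p/(p-1)}(\R^d)$ via the max--min inequality of Proposition \ref{maxex}, obtain coercivity by estimating $\int fu$ through H\"older, interpolation of $q'$ between $2p/(p-1)$ and $b'$, and the Gagliardo--Nirenberg--Sobolev inequalities \eqref{gnsd3}--\eqref{gnsd2}, then conclude by weak compactness, lower semicontinuity, and strict convexity. Your treatment of the endpoint cases and of the exclusion of $b$ when $d=2$ is consistent with (and slightly more explicit than) the paper's argument, so the proof is correct and essentially identical in approach.
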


\begin{proof}
Arguing as in Proposition \ref{maxex}, we have that for $p>1$ the optimal potential $V_p$ is given by
\be\label{Vprd}
V_p=\left(\int_{\R^d}|u_p|^{2p/(p-1)}\,dx\right)^{-1/p}|u_p|^{2/(p-1)},
\ee
where $u_p$ is the solution of the problem
\begin{align}\label{Jard}
&\min\Bigg\{\frac12\int_{\R^d}|\nabla u|^2\,dx+\frac12\left(\int_{\R^d}|u|^{2p/(p-1)}\,dx\right)^{(p-1)/p}-\int_{\R^d}uf\,dx\ :\\
&\hskip7truecm u\in\dt{H}^1(\R^d)\cap L^{2p/(p-1)}(\R^d)\Bigg\}.\nonumber
\end{align}
Thus, it is enough to prove that there exists a solution of \eqref{Jard}. For a minimizing sequence $u_n$ we have
\be\label{apriorird}
\frac12\int_{\R^d}|\nabla u_n|^2\,dx+\frac12\left(\int_{\R^d}|u_n|^{2p/(p-1)}\,dx\right)^{(p-1)/p}\le\int_{\R^d}u_nf\,dx\le C\|f\|_{L^q}\|u_n\|_{L^{q'}}.
\ee
Suppose that $d\ge 3$. Interpolating $q'$ between $2p/(p-1)$ and $2d/(d-2)$ and using the Gagliardo-Nirenberg-Sobolev inequality \eqref{gnsd3}, we obtain that there is a constant $C$, depending only on $p,d$ and $f$, such that
$$\frac12\int_{\R^d}|\nabla u_n|^2\,dx+\frac12\left(\int_{\R^d}|u_n|^{2p/(p-1)}\,dx\right)^{(p-1)/p}\le C.$$
Thus we can suppose that $u_n$ converges weakly in $\dt{H}^1(\R^d)$ and in $L^{2p/(p-1)}(\R^d)$ and so, the problem \eqref{Jard} has a solution. In the case $d\le2$, the claim follows since, by using \eqref{gnsd1}, \eqref{gnsd2} and interpolation, we can still estimate $\|u_n\|_{L^{q'}}$ by means of $\|\nabla u_n\|_{L^2}$ and $\|u_n\|_{L^{2p/(p-1)}}$.
\end{proof}

Repeating the argument of Subsection \ref{s31}, one obtains an existence result for \eqref{maxrd} in the case $p=1$, too.

\begin{prop}\label{maxonerd}
Let $f\in L^q(\R^d)$, where $q\in[1,\frac{2d}{d+2}]$, if $d\ge 3$, and $q=1$, if $d=1,2$. Then there is a unique solution $V_1$ of problem \eqref{maxrd} with $p=1$, which is given by
$$V_1=\frac{f}{M}\left(\chi_{\omega_+}-\chi_{\omega_-}\right),$$
where $M=\|u_1\|_{L^\infty(\R^d)}$, $\omega_+=\{u_1=M\}$, $\omega_-=\{u_1=-M\}$, and $u_1$ is the unique minimizer of 
\be\label{J1xx}
\min\left\{\frac12\int_{\R^d}|\nabla u|^2\,dx+\frac12\|u\|_{L^\infty}^2-\int_{\R^d} uf\,dx:\ u\in\dt H^1(\R^d)\cap L^\infty(\R^d)\right\}.
\ee
In particular, $\int_{\omega_+} f\,dx-\int_{\omega_-}f\,dx=M$, $f\ge 0$ on $\omega_+$ and $f\le0$ on $\omega_-$.
\end{prop}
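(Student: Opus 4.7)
The argument closely mirrors that of Proposition \ref{maxone}, with modifications to handle the unboundedness of $\R^d$.

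First, I would establish the min-max inequality
\[
\sup\left\{\E_f(V)\ :\ V\ge0,\ \int_{\R^d}V\,dx\le1\right\}\le\inf\left\{J_1(u)\ :\ u\in\dt{H}^1(\R^d)\cap L^\infty(\R^d)\right\}
\]
via the pointwise bound $\int u^2 V\,dx\le\|u\|_\infty^2\int V\,dx\le\|u\|_\infty^2$. Existence of a (unique) minimizer $u_1$ of $J_1$ would then follow by the direct method: for a minimizing sequence $u_n$ with $J_1(u_n)\le0$, one has $\int u_n f\,dx\le\|f\|_{L^q}\|u_n\|_{L^{q'}}$, and by interpolating $L^{q'}$ between $L^\infty$ and $L^{2d/(d-2)}$ (using \eqref{gnsd3} when $d\ge3$) or trivially by $\|u_n\|_\infty$ when $d\le2$ with $q=1$, combined with Young's inequality, this yields uniform bounds on $\|\nabla u_n\|_{L^2}$ and $\|u_n\|_\infty$. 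Extracting a subsequence convergent weakly in $\dt{H}^1$ and weakly-$*$ in $L^\infty$, the weak-$*$ lower semicontinuity of $\|\cdot\|_\infty$ together with strict convexity of $J_1$ give existence and uniqueness of $u_1$.

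Next, interior regularity $u_1\in H^2_{loc}(\R^d)$ would be obtained through the $L^p$ approximation scheme. For each $p>1$, Proposition \ref{pinterv} provides a minimizer $u_p$ of \eqref{Jard}, and the Nirenberg-difference computation of Remark \ref{unifest} is purely local, so the estimate $\|u_p\|_{H^2(\O')}\le C(f,\O')$ holds uniformly in $p$ on $\R^d$ as well. A localized version of the $\Gamma$-convergence $J_p\to J_1$ (Lemma \ref{sc}), whose proof is also local in nature, then identifies the weak-$L^2_{loc}$ limit of a subsequence $u_{p_n}$ with $u_1$ and yields $u_1\in H^2_{loc}(\R^d)$.

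With $u_1$ in hand, I would derive the PDE $-\Delta u_1+\chi_\omega f=f$ in $\R^d$, where $\omega=\omega_+\cup\omega_-$. Variations $\varphi\in C_c^\infty(\{|u_1|<M-\eps\})$ do not increase $\|u_1+t\varphi\|_\infty$ beyond $M$ for small $t$, so $-\Delta u_1=f$ on this set; sending $\eps\to0$ via monotone convergence extends this to $\R^d\setminus\omega$. On $\omega$, the $H^2_{loc}$ regularity forces $\Delta u_1=0$ a.e.\ by Stampacchia's theorem. The radial variation $(1+\eps)u_1$ yields the Euler-Lagrange identity $\int|\nabla u_1|^2\,dx+M^2=\int fu_1\,dx$, while testing the PDE against $u_1$ gives $\int|\nabla u_1|^2\,dx+M\bigl(\int_{\omega_+}f\,dx-\int_{\omega_-}f\,dx\bigr)=\int fu_1\,dx$. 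Subtracting produces $M=\int_{\omega_+}f\,dx-\int_{\omega_-}f\,dx$, so $V_1:=\frac{1}{M}(\chi_{\omega_+}-\chi_{\omega_-})f$ satisfies $\int_{\R^d}V_1\,dx=1$ and $V_1u_1=\chi_\omega f$, whence $-\Delta u_1+V_1u_1=f$ and $\E_f(V_1)=J_1(u_1)$, saturating the min-max inequality. Admissibility $V_1\ge0$ would then come from one-sided variations $\varphi\ge0$ supported in $\{u_1<M-\eps\}$: these yield $-\Delta u_1\ge f$ a.e.\ on that set, and combined with $\Delta u_1=0$ a.e.\ on $\omega_-$ force $f\le0$ on $\omega_-$; the symmetric argument gives $f\ge0$ on $\omega_+$, and uniqueness of $V_1$ follows from that of $u_1$. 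The main obstacle I would expect is the regularity step: unlike in a bounded domain, global strong $L^2$-convergence of $u_p$ to $u_1$ is not automatic, so the passage to the limit $p\to1$ must be carried out locally using the uniform Nirenberg bound together with a localized version of Lemma \ref{sc}.
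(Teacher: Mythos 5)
Your proposal is correct and follows exactly the route the paper intends: the paper gives no separate proof of Proposition \ref{maxonerd}, stating only that one repeats the argument of Subsection \ref{s31} (i.e.\ Proposition \ref{maxone}), which is precisely what you do. The adaptations you supply for $\R^d$ --- coercivity of $J_1$ on $\dt H^1(\R^d)\cap L^\infty(\R^d)$ via interpolation with \eqref{gnsd3} and the admissible range of $q$, and the localization of the Nirenberg $H^2$ estimate and of Lemma \ref{sc} when passing $p\to1$ --- are exactly the points where the bounded-domain argument needs modification, and you handle them appropriately.
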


We note that, when $p=1$, the support of the optimal potential $V_1$ is contained in the support of the function $f$. This is not the case if $p>1$, as the following example shows.

\begin{exam}\label{exammaxrd}
Let $f=\chi_{B(0,1)}$ and $p>1$. By our previous analysis we know that there exist a solution $u_p$ for problem \eqref{Jard} and a solution $V_p$ for problem \eqref{maxrd} given by \eqref{Vprd}. We note that $u_p$ is positive, radially decreasing and satisfies the equation
$$-u''(r)-\frac{d-1}{r}u'(r)+C u^\alpha=0,\qquad r\in(1,+\infty),$$
where $\alpha=2p/(p-1)>2$ and $C$ is a positive constant. Thus, we have that 
$$u_p(r)=kr^{2/(1-\alpha)},$$
where $k$ is an explicit constant depending on $C$, $d$ and $\alpha$. In particular, we have that $u_p$ is not compactly supported on $\R^d$ (see Figure \ref{fig1}).

\begin{figure}[h]
\includegraphics[scale=1]{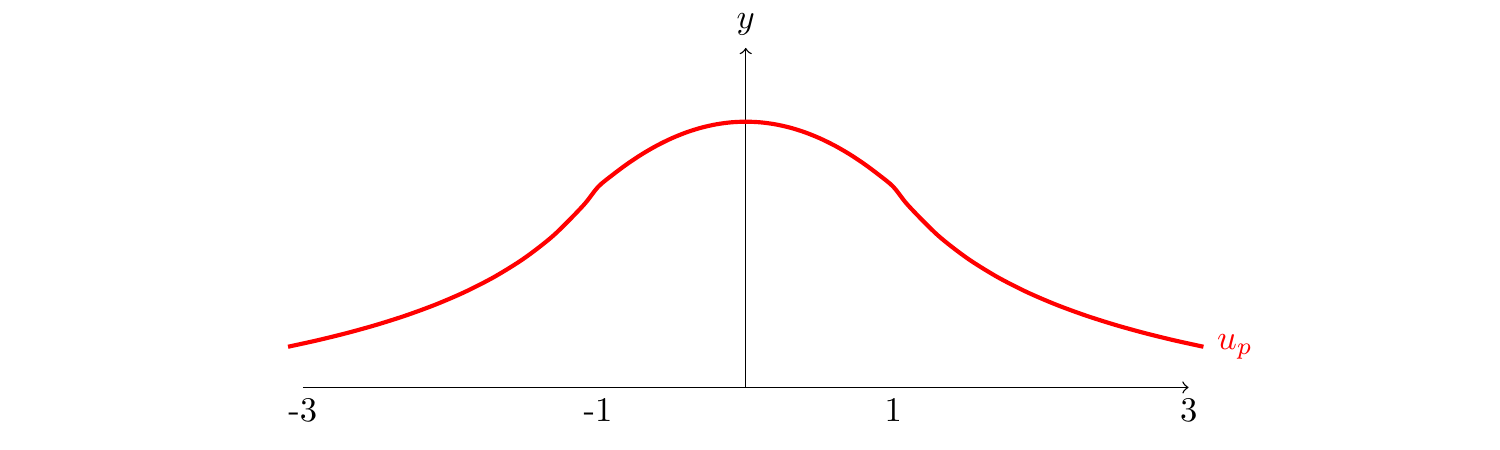}
\caption{The solution $u_p$ of problem \eqref{Jard}, with $p>1$ and $f=\chi_{B(0,1)}$ does not have a compact support.}\label{fig1}
\end{figure}

\end{exam}

\subsection{Optimal potentials with unbounded constraint}\label{ss52}

In this subsection we consider the problems 
\begin{align}
&\min\left\{\E_f(V)\ :\ V\ge0,\ \int_{\R^d}V^{-p}\,dx\le1\right\},\label{minrd}\\
&\min\left\{\lambda_1(V)\ :\ V\ge0,\ \int_{\R^d}V^{-p}\,dx\le1\right\},\label{lbrd}
\end{align}
for $p>0$ and $f\in L^q(\R^d)$. We will see in Proposition \ref{exErd} that in order to have existence for \eqref{minrd} the parameter $q$ must satisfy some constraint, depending on the value of $p$ and on the dimension $d$. Namely, we need $q$ to satisfy the following conditions
\begin{align}
q\in[\frac{2d}{d+2},\frac{2p}{p-1}],\ \hbox{if}\ d\ge3\ \hbox{and}\ p>1,\nonumber\\
q\in[\frac{2d}{d+2},+\infty],\ \hbox{if}\ d\ge3\ \hbox{and}\ p\le1,\nonumber\\
q\in(1,\frac{2p}{p-1}],\ \hbox{if}\ d=2\ \hbox{and}\ p>1,\label{admq}\\
q\in(1,+\infty],\  \hbox{if}\ d=2\ \hbox{and}\ p\le1,\nonumber\\
q\in[1,\frac{2p}{p-1}],\ \hbox{if}\ d=1\ \hbox{and}\ p>1,\nonumber\\
q\in[1,+\infty],\ \hbox{if}\ d=1\ \hbox{and}\ p\le1\nonumber.
\end{align}
We say that $q=q(p,d)\in[1,+\infty]$ is \emph{admissible} if it satisfy \eqref{admq}. Note that $q=2$ is admissible for any $d\ge1$ and any $p>0$. 

\begin{prop}\label{exErd}
Let $p>0$ and $f\in L^q(\R^d)$, where $q$ is admissible in the sense of \eqref{admq}. Then the minimization problem \eqref{minrd} has a solution $V_p$ given by 
\be\label{V-prd}
V_p=\left(\int_{\R^d} |u_p|^{2p/(p+1)}\,dx\right)^{1/p}|u_p|^{-2/(1+p)},
\ee
where $u_p$ is a minimizer of
\begin{align}\label{J-ard}
&\min\Bigg\{\frac12 \int_{\R^d} |\nabla u|^2\,dx+\frac12\left(\int_{\R^d} |u|^{2p/(p+1)}\,dx\right)^{(p+1)/p}-\int_{\R^d} uf\,dx:\\
&\hskip7truecm u\in \dt{H}^1(\R^d),\ |u|^{2p/(p+1)}\in L^1(\R^d)\Bigg\}.\nonumber
\end{align}
Moreover, if $p\ge1$, then the functional in \eqref{J-ard} is convex, its minimizer  is unique and so is the solution of \eqref{minrd}.
\end{prop}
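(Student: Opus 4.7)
The plan is to follow the minimax exchange argument used in Propositions \ref{maxex} and \ref{pinterv}. For any $u\in\dt{H}^1(\R^d)$ with $|u|^{2p/(p+1)}\in L^1(\R^d)$ and any admissible $V\ge0$ satisfying $\int_{\R^d}V^{-p}\,dx\le1$, the factorisation $|u|^{2p/(p+1)}=V^{-p/(p+1)}(Vu^2)^{p/(p+1)}$ combined with H\"older's inequality (conjugate exponents $p+1$ and $(p+1)/p$) yields
$$\int_{\R^d}|u|^{2p/(p+1)}\,dx\le\left(\int_{\R^d}V^{-p}\,dx\right)^{1/(p+1)}\left(\int_{\R^d}Vu^2\,dx\right)^{p/(p+1)}.$$
Raising to the $(p+1)/p$ power and using $\int V^{-p}\le1$ gives the key inequality
$$\int_{\R^d}Vu^2\,dx\ge\left(\int_{\R^d}|u|^{2p/(p+1)}\,dx\right)^{(p+1)/p},$$
with equality (and saturation of the constraint) precisely when $V$ equals the potential defined by \eqref{V-prd} with $u$ in place of $u_p$. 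Hence $\E_f(V)\ge\inf J$ for every admissible $V$, with $J$ as in \eqref{J-ard}. Conversely, if $u_p$ minimizes $J$, its Euler-Lagrange equation reads $-\Delta u_p+V_pu_p=f$ with $V_p$ built from $u_p$ via \eqref{V-prd} (this uses precisely the Hölder equality case and a direct computation of the derivative of $u\mapsto\|u\|_{L^{2p/(p+1)}}^2$), so $\E_f(V_p)=J(u_p)=\inf J$ and $V_p$ is optimal for \eqref{minrd}. The proposition thus reduces to the existence of a minimizer of $J$.

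The existence of $u_p$ is obtained by the direct method. Since $J(0)=0$, any minimizing sequence $u_n$ satisfies
$$\tfrac12\|\nabla u_n\|_{L^2(\R^d)}^2+\tfrac12\|u_n\|_{L^{2p/(p+1)}(\R^d)}^2\le\int_{\R^d}fu_n\,dx\le\|f\|_{L^q(\R^d)}\|u_n\|_{L^{q'}(\R^d)},$$
where I used that the nonlinear term of $J$ equals $\|u_n\|_{L^{2p/(p+1)}}^2$. The admissibility condition \eqref{admq} is cooked so that $q'$ lies in the interval $[2p/(p+1),2d/(d-2)]$ when $d\ge3$, and in the analogous intervals dictated by \eqref{gnsd1}-\eqref{gnsd2} when $d\le2$. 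Interpolation between $L^{2p/(p+1)}$ and the relevant Sobolev endpoint then gives
$$\|u_n\|_{L^{q'}(\R^d)}\le C\|u_n\|_{L^{2p/(p+1)}(\R^d)}^{\theta}\|\nabla u_n\|_{L^2(\R^d)}^{1-\theta}$$
for some $\theta\in[0,1]$, and an elementary algebraic manipulation yields uniform bounds on $\|\nabla u_n\|_{L^2}$ and $\|u_n\|_{L^{2p/(p+1)}}$. Up to subsequences, $u_n\rightharpoonup u_p$ weakly in $\dt{H}^1(\R^d)$ and in $L^{q'}(\R^d)$, with pointwise a.e.\ convergence along a further (diagonal) subsequence extracted through local Rellich compactness. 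The weak lower semicontinuity of $\|\nabla\cdot\|_{L^2}^2$, Fatou's lemma applied to $|u_n|^{2p/(p+1)}$, and the continuity of $u\mapsto\int fu$ in the weak $L^{q'}$ topology combine to give $J(u_p)\le\liminf_n J(u_n)$, so $u_p$ is the sought minimizer.

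For the uniqueness statement when $p\ge1$, note that $2p/(p+1)\ge1$, so $\|\cdot\|_{L^{2p/(p+1)}}$ is a genuine norm, its square is convex, and together with the strictly convex Dirichlet term $\tfrac12\|\nabla u\|_{L^2}^2$ and the linear term $-\int fu$, the functional $J$ is strictly convex; this gives uniqueness of $u_p$ and, via \eqref{V-prd}, of $V_p$. The main technical obstacle will be the case $p<1$: then $2p/(p+1)<1$ and the nonlinear term in $J$ is no longer the square of a norm, so its lower semicontinuity along weakly $\dt{H}^1$-convergent sequences cannot be derived from norm semicontinuity and must instead be recovered through Fatou's lemma applied to a pointwise a.e.\ convergent subsequence of the minimizing sequence, which is where the extra care in extracting such a subsequence in the unbounded setting $\R^d$ becomes essential.
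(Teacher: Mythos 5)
Your proposal is correct and follows essentially the same route as the paper, which simply refers back to the interpolation/direct-method argument of Proposition \ref{pinterv} for the existence of a minimizer of \eqref{J-ard} and to the pointwise optimization of $V\mapsto\int u^2V\,dx$ under $\int V^{-p}\,dx\le1$ from Subsection \ref{s41} for identifying $V_p$. You merely make explicit what the paper leaves implicit (the H\"older equality case, the Fatou argument needed when $2p/(p+1)<1$), and these details are sound.
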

\begin{proof}
By means of \eqref{gnsd3}, \eqref{gnsd1} and \eqref{gnsd2}, and thanks to the admissibility of $q$, we get the existence of a solution of \eqref{J-ard} through an interpolation argument similar to the one used in the proof of Proposition \ref{pinterv}. The existence of an optimal potential follows by the same argument as in Subsection \ref{s41}.
\end{proof}

In Example \ref{exammaxrd}, we showed that the optimal potentials for \eqref{maxrd}, may be supported on the whole $\R^d$. The analogous question for the problem \eqref{minrd} is whether the optimal potentials given by \eqref{V-prd} have a bounded set of finiteness $\{V_p<+\infty\}$. In order to answer this question, it is sufficient to study the support of the solutions $u_p$ of \eqref{J-ard}, which solve the equation
\be\label{eulagrd}
-\Delta u+C_p|u|^{-2/(p+1)}u=f,
\ee
where $C_p>0$ is a constant depending on $p$.

\begin{prop}\label{cs1}
Let $p>0$ and let $f\in L^q(\R^d)$, for $q>d/2$, be a nonnegative function with a compact support. Then every solution $u_p$ of problem \eqref{J-ard} has a compact support.
\end{prop}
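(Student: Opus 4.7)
The plan is to show that $u_p$ has compact support by comparing it with an explicit radial \emph{dead-core} supersolution constructed outside a ball containing $\mathrm{supp}(f)$. Since $f\ge 0$, I may assume $u_p\ge 0$: replacing $u_p$ by $|u_p|$ in \eqref{J-ard} preserves the first two terms and can only increase the linear term $\int u_pf\,dx$, so the minimizer is nonnegative. Setting $\beta:=(p-1)/(p+1)\in(-1,1)$, equation \eqref{eulagrd} becomes $-\Delta u_p+C_pu_p^{\beta}=f$ on $\{u_p>0\}$. The assumption $q>d/2$ combined with $u_p\in\dt H^1(\R^d)\cap L^{2p/(p+1)}(\R^d)$ yields, via a standard Moser/Stampacchia iteration applied to $-\Delta u_p\le f$ (the absorption term having the favorable sign when $\beta\ge 0$), that $u_p\in L^\infty(\R^d)$; applying the same estimate on unit balls centered at points $x_n$ with $|x_n|\to\infty$, where $f$ vanishes, together with the integrability $\int u_p^{2p/(p+1)}\,dx<\infty$, forces $u_p(x)\to 0$ as $|x|\to\infty$.

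Fix $R$ large enough that $\mathrm{supp}(f)\subset B_R$, so that on $\R^d\setminus\overline{B_R}$ one has $-\Delta u_p+C_pu_p^{\beta}=0$. I would construct a barrier $W(x)=\Phi(|x|-R)$ from the one-dimensional dead-core ODE
\[
-\Phi''(t)+C_p\Phi(t)^{\beta}=0,\qquad \Phi(0)=\|u_p\|_\infty,\qquad \Phi(L)=\Phi'(L)=0.
\]
Multiplication by $\Phi'$ and integration using the boundary condition at $L$ yield the first integral $(\Phi')^2=\tfrac{2C_p}{\beta+1}\Phi^{\beta+1}$, which is solvable since $\beta+1>0$ and integrates to the explicit profile $\Phi(t)=c\,(L-t)_+^{2/(1-\beta)}$, with $c,L>0$ adjusted so that $\Phi(0)=\|u_p\|_\infty$. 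Because $\Phi'\le 0$, the radial curvature correction $-\tfrac{d-1}{r+R}\Phi'$ is nonnegative, so $W$ is a supersolution of $-\Delta W+C_pW^{\beta}\ge 0$ on the annulus $A:=B_{R+L}\setminus\overline{B_R}$, with $W=\|u_p\|_\infty\ge u_p$ on $\partial B_R$ and $W\equiv 0$ outside $B_{R+L}$.

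I would then apply a weak comparison principle on $\R^d\setminus\overline{B_R}$ (with $W$ extended by zero beyond $B_{R+L}$): testing the inequality $-\Delta(u_p-W)+C_p(u_p^{\beta}-W^{\beta})\le 0$ against $(u_p-W)_+$, which belongs to $H^1_0(\R^d\setminus\overline{B_R})$ thanks to $u_p\le W$ on $\partial B_R$ and the decay $u_p\to 0$ at infinity, and using the monotonicity $(s^{\beta}-t^{\beta})(s-t)\ge 0$ for $\beta\in[0,1)$ and $s,t\ge 0$, gives $u_p\le W$ everywhere outside $B_R$, hence $u_p\equiv 0$ on $\R^d\setminus B_{R+L}$. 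The main obstacle is the singular range $p<1$, i.e.\ $\beta\in(-1,0)$, where $s\mapsto s^{\beta}$ is decreasing and the pointwise monotonicity reverses. In that regime the cleanest fix is to argue variationally: the truncation $\tilde u:=\min(u_p,W)$ coincides with $u_p$ on $B_R$, is supported in $B_{R+L}$, and the superharmonicity of $W$ combined with strict convexity of the $L^{2p/(p+1)}$-norm-squared term in \eqref{J-ard} should give $J_p(\tilde u)\le J_p(u_p)$, with strict inequality unless $u_p\le W$ a.e. Making this variational comparison rigorous near the set $\{W=0\}$, where the absorption $u^{\beta}$ is singular, is the most delicate point and likely requires a preliminary truncation of $\beta$ followed by passage to the limit.
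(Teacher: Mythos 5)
Your barrier construction is a genuinely different route from the paper's: you build an explicit dead-core supersolution $W(x)=c\,(L-(|x|-R))_+^{2/(1-\beta)}$ outside a ball containing $\mathrm{supp}(f)$ and compare, whereas the paper first proves compact support for \emph{radial} minimizers by an ODE first-integral argument (the same first integral $(\Phi')^2=\tfrac{2C_p}{\beta+1}\Phi^{\beta+1}$ you use, but run as a contradiction rather than solved explicitly), and then dominates the general minimizer by a rescaled radial minimizer $w_t(x)=t^{2/(1-s)}w(x/t)$ of the same variational problem with datum $t^{2s/(1-s)}\chi_{B_t}$. For $p>1$ your argument is correct and arguably more self-contained: the Euler--Lagrange equation \eqref{eulagrd} holds in the standard weak sense (the absorption $|u|^{-2/(p+1)}u=u^{\beta}$ with $\beta\in(0,1)$ is continuous and monotone at $u=0$), the radial correction $-\tfrac{d-1}{r}\Phi'\ge0$ does make $W$ a weak supersolution across $\{|x|=R+L\}$ since $\Phi'(L)=0$, and testing with $(u_p-W)_+$ closes the proof. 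The preliminary $L^\infty$ bound and decay via $q>d/2$ match what the paper does with \cite[Theorem 9.11]{gt}.

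The genuine gap is the range $0<p\le1$, which the statement covers and which you flag but do not close. There the comparison principle is truly unavailable: for $\beta<0$ the map $s\mapsto s^{\beta}$ is decreasing, so the term $C_p\int(u_p^{\beta}-W^{\beta})(u_p-W)_+$ has the wrong sign, and moreover \eqref{eulagrd} is not the weak Euler--Lagrange equation on $\{u_p=0\}$ because $|u|^{2p/(p+1)}$ is not differentiable at $0$ when $2p/(p+1)\le1$. Your proposed repair rests on ``strict convexity of the $L^{2p/(p+1)}$-norm-squared term,'' but for $p<1$ the exponent $2p/(p+1)$ is below $1$, $\|\cdot\|_{2p/(p+1)}$ is only a quasi-norm, and that term is \emph{not} convex --- indeed the paper itself asserts convexity of the functional in \eqref{J-ard} only for $p\ge1$ (Proposition \ref{exErd}). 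The variational inequality $J_p(\min(u_p,W))\le J_p(u_p)$ also does not follow from superharmonicity alone: the gradient terms satisfy $\int|\nabla u_p|^2-\int|\nabla\min(u_p,W)|^2=\int|\nabla(u_p-W)_+|^2+2\int\nabla W\cdot\nabla(u_p-W)_+$, and controlling the cross term requires precisely the supersolution property tested against $(u_p-W)_+$ together with a sign on $\int W^{\beta}(u_p-W)_+$, which brings back the singularity of $W^{\beta}$ near $\{W=0\}$. So for $p\le1$ you would need either the paper's device --- compare with a compactly supported minimizer of the \emph{same} functional, so that the truncation competes directly in \eqref{J-ard} --- or a strong-absorption (D\'{\i}az--V\'azquez type) argument that exploits $\beta<0$ directly rather than monotone comparison.
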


\begin{proof}
With no loss of generality we may assume that $f$ is supported in the unit ball of $\R^d$. We first prove the result when $f$ is radially decreasing. In this case $u_p$ is also radially decreasing and nonnegative. Let $v$ be the function defined by $v(|x|)=u_p(x)$. Thus $v$ satisfies the equation
\be\label{ode1}
\begin{cases}
\ds-v''-\frac{d-1}{r}v'+C_p v^s=0\qquad r\in(1,+\infty),\\
v(1)=u_p(1),
\end{cases}
\ee
where $s=(p-1)/(p+1)$ and $C_p>0$ is a constant depending on $p$. Since $v\ge0$ and $v'\le 0$, we have that $v$ is convex. Moreover, since 
$$\int_1^{+\infty}v^2 r^{d-1}\,dr<+\infty,\qquad\int_1^{+\infty}|v'|^2 r^{d-1}\,dr<+\infty,$$ 
we have that $v$, $v'$ and $v''$ vanish at infinity. Multiplying \eqref{ode1} by $v'$ we obtain
$$\left(\frac{v'(r)^2}{2}-C_p\frac{v(r)^{s+1}}{s+1}\right)'=-\frac{d-1}{r}v'(r)^2\le0.$$
Thus the function $v'(r)^2/2-C_pv(r)^{s+1}/(s+1)$ is decreasing and vanishing at infinity and thus nonnegative. Thus we have 
\be\label{ode2}
-v'(r)\ge C v(r)^{(s+1)/2},\ r\in(1,+\infty),
\ee
where $C=\big(2C_p/(s+1)\big)^{1/2}$. Arguing by contradiction, suppose that $v$ is strictly positive on $(1,+\infty)$. Dividing both sides of \eqref{ode2} and integrating, we have
$$-v(r)^{(1-s)/2}\ge Ar+B,$$
where $A=2C/(1-s)$ and $B$ is determined by the initial datum $v(1)$. This cannot occur, since the left hand side is negative, while the right hand side goes to $+\infty$, as $r\to+\infty$. 

We now prove the result for a generic compactly supported and nonnegative $f\in L^q(\R^d)$. Since the solution $u_p$ of \eqref{J-ard} is nonnegative and is a weak solution of \eqref{eulagrd}, we have that on each ball $B_R\subset\R^d$, $u_p\le u$, where $u\in H^1(B_R)$ is the solution of
$$-\Delta u=f\ \hbox{in}\ B_R,\qquad u=u_p\ \hbox{on}\ \partial B_R.$$
Since $f\in L^{d/2}(\R^d)$, by \cite[Theorem 9.11]{gt} and a standard bootstrap argument on the integrability of $u$, we have that $u$ is continuous on $B_{R/2}$. As a consequence, $u_p$ is locally bounded in $\R^d$. In particular, it is bounded since $u_p\wedge M$, where $M=\|u_p\|_{L^\infty(B_1)}$, is a better competitor than $u_p$ in \eqref{J-ard}. Let $w$ be a radially decreasing minimizer of \eqref{J-ard} with $f=\chi_{B_1}$. Thus $w$ is a solution of the PDE
$$-\Delta w+C_p w^s=\chi_{B_1},$$
in $\R^d$, where $C_p$ is as in \eqref{ode1}. Then, the function $w_t(x)=t^{2/(1-s)}w(x/t)$ is a solution of the equation
$$-\Delta w_t+C_p w^s_t=t^{2s/(1-s)}\chi_{B_t}.$$
Since $u_p$ is bounded, there exists some $t\ge1$ large enough such that $w_t\ge u_p$ on the ball $B_t$. Moreover, $w_t$ minimizes \eqref{J-ard} with $f=t^{2s/(1-s)}\chi_{B_t}$ and so $w_t\ge u_p$ on $\R^d$ (otherwise $w_t\wedge u_p$ would be a better competitor in \eqref{J-ard} than $w_p$). The conclusion follows since, by the first step of the proof, $w_t$ has compact support.
\end{proof}

The problems \eqref{lbrd} and \eqref{minrd} are similar both in the questions of existence and the qualitative properties of the solutions.

\begin{prop}\label{lbrdex}
For every $p>0$ there is a solution of the problem \eqref{lbrd} given by 
\be\label{V-prd2}
V_p=\left(\int_{\R^d}|u_p|^{2p/(p+1)}\,dx\right)^{1/p}|u_p|^{-2/(1+p)},
\ee
where $u_p$ is a radially decreasing minimizer of 
\begin{align}\label{J-ard2}
&\min\Bigg\{\int_{\R^d}|\nabla u|^2\,dx+\left(\int_{\R^d}|u|^{2p/(p+1)}\,dx\right)^{(p+1)/p}\ :\ u\in H^1(\R^d),\ \int_{\R^d}u^2\,dx=1\Bigg\}.
\end{align}
Moreover, $u_p$ has a compact support, hence the set $\{V_p<+\infty\}$ is a ball of finite radius in $\R^d$.
\end{prop}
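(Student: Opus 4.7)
The plan is to follow the min-min scheme of Subsection \ref{s41}. Writing
\[
\lambda_1(V)=\min\Big\{\int_{\R^d}|\nabla u|^2\,dx+\int_{\R^d}Vu^2\,dx\ :\ u\in H^1(\R^d),\ \|u\|_{L^2}=1\Big\}
\]
and interchanging the two minimizations, the inner problem $\min\{\int u^2 V\,:\,V\ge 0,\ \int V^{-p}\,dx\le 1\}$ is solved by Lagrange multipliers, yielding $V$ in terms of $u$ as in \eqref{V-prd2} and minimal value $(\int|u|^{2p/(p+1)}\,dx)^{(p+1)/p}$. Thus \eqref{lbrd} reduces to \eqref{J-ard2}, and it remains to produce a radially nonincreasing minimizer $u_p$ of \eqref{J-ard2} and to show it has compact support.

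For existence, I would apply Schwarz symmetrization to any minimizing sequence $u_n$: since $\|u_n^\ast\|_{L^2}=\|u_n\|_{L^2}$, $\|u_n^\ast\|_{L^r}=\|u_n\|_{L^r}$ for $r:=2p/(p+1)\in(0,2)$, and $\|\nabla u_n^\ast\|_{L^2}\le \|\nabla u_n\|_{L^2}$ by P\'olya-Szeg\H{o}, we may assume $u_n\ge 0$ is radially nonincreasing. The functional then controls $\|\nabla u_n\|_{L^2}$ and $\|u_n\|_{L^r}$, so a subsequence converges weakly in $H^1(\R^d)$ and a.e. to a radially nonincreasing $u_p\ge 0$. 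The key step is verifying $\|u_p\|_{L^2}=1$: for radially nonincreasing functions, the $L^r$ bound gives the uniform pointwise decay $u_n(x)\le C|x|^{-d/r}$, whence
\[
\int_{|x|>R}u_n^2\,dx\le\|u_n\|_{L^\infty(B_R^c)}^{2-r}\|u_n\|_{L^r}^r\le CR^{-d(2-r)/r}\longrightarrow 0
\]
uniformly in $n$ as $R\to\infty$, since $r<2$. Combined with Rellich's compact embedding on balls, this rules out loss of mass at infinity and produces strong $L^2(\R^d)$ convergence. Weak lower semicontinuity of $\|\nabla\cdot\|_{L^2}^2$ and Fatou's lemma applied to $|u|^r$ then show that $u_p$ is a minimizer.

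For compact support, the Euler-Lagrange equation of \eqref{J-ard2} reads, on $\{u_p>0\}$,
\[
-\Delta u_p+C_p\,u_p^{s}=\lambda_p u_p,\qquad s:=\tfrac{p-1}{p+1}<1,\quad C_p:=\|u_p\|_{L^r}^{2/(p+1)}>0,
\]
with $\lambda_p$ the Lagrange multiplier. Setting $v(|x|)=u_p(x)$, a direct computation using the radial ODE shows that
\[
\mathcal I(r):=-\tfrac12(v')^2+\tfrac{C_p}{s+1}v^{s+1}-\tfrac{\lambda_p}{2}v^2
\]
satisfies $\mathcal I'(r)=\tfrac{d-1}{r}(v')^2\ge 0$; since $v(r)\to 0$ and $v'(r_n)\to 0$ along some $r_n\to\infty$ (using $\int|v'|^2 r^{d-1}\,dr<\infty$), one obtains $\mathcal I\le 0$ everywhere. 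For $r\ge R_0$ with $v$ small enough, the term $\lambda_p v^2$ is absorbed into $\tfrac{C_p}{s+1}v^{s+1}$ (possible precisely because $s<1$), yielding $-v'\ge c_0\, v^{(s+1)/2}$; integrating this ODE inequality exactly as in Proposition \ref{cs1} forces $v$ to vanish at some finite radius. Hence $\{u_p>0\}=\{V_p<+\infty\}$ is a ball.

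The main technical obstacle is the compactness step for \eqref{J-ard2}: the functional is translation-invariant, so one might a priori expect concentration-compactness issues, but the combination of Schwarz symmetrization with the $L^{r}$-penalty of exponent $r=2p/(p+1)<2$ provides exactly the uniform tail decay needed to preserve the $L^2$-mass in the limit.
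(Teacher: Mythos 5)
Your proposal is correct and follows essentially the same route as the paper: reduction to \eqref{J-ard2} via the min--min interchange of Subsection \ref{s41}, existence by P\'olya--Szeg\H{o} symmetrization plus the pointwise decay of radial nonincreasing functions in $L^{2p/(p+1)}$ to get a uniform $L^2$ tail bound and hence strong $L^2$ compactness, and compact support via the same monotone ODE energy and the differential inequality $-v'\ge c_0 v^{(s+1)/2}$ as in Proposition \ref{cs1}. Your write-up is in fact slightly more explicit than the paper's at the two delicate points (ruling out loss of mass at infinity, and absorbing the $\lambda_p v^2$ term using $s+1<2$), but the argument is the same.
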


\begin{proof}
Let us first show that the minimum in \eqref{J-ard2} is achieved. Let $u_n\in H^1(\R^d)$ be a minimizing sequence of positive functions normalized in $L^2$. Note that by the P\'olya-Szeg\"o inequality we may assume that each of these functions is radially decreasing in $\R^d$ and so we will use the identification $u_n=u_n(r)$. In order to prove that the minimum is achieved it is enough to show that the sequence $u_n$ converges in $L^2(\R^d)$. Indeed, since $u_n$ is a radially decreasing minimizing sequence, there exists $C>0$ such that for each $r>0$ we have
$$u_n(r)^{2p/(p+1)}\le\frac{1}{|B_r|}\int_{B_r}u_n^{2p/(p+1)}\,dx\le\frac{C}{r^d}.$$
Thus, for each $R>0$, we obtain
\be\label{lbrdex1}
\int_{B_R^c}u_n^2\,dx\le C_1\int_R^{+\infty}r^{-d(p+1)/p}\,r^{d-1}\,dr=C_2R^{-1/p},
\ee
where $C_1$ and $C_2$ do not depend on $n$ and $R$. Since the sequence $u_n$ is bounded in $H^1(\R^d)$, it converges locally in $L^2(\R^d)$ and, by \eqref{lbrdex1}, this convergence is also strong in $L^2(\R^d)$. Thus, we obtain the existence of a radially symmetric and decreasing solution $u_p$ of \eqref{J-ard2} and so, of an optimal potential $V_p$ given by \eqref{V-prd2}. 

We now prove that the support of $u_p$ is a ball of finite radius. By the radial symmetry of $u_p$ we can write it in the form $u_p(x)=u_p(|x|)=u_p(r)$, where $r=|x|$. With this notation, $u_p$ satisfies the equation:
$$-u_p''-\frac{d-1}{r}u_p'+C_pu_p^s=\lambda u_p,$$
where $s=(p-1)/(p+1)<1$ and $C_p>0$ is a constant depending on $p$. Arguing as in Proposition \ref{cs1}, we obtain that, for $r$ large enough,
$$-u_p'(r)\ge \left(\frac{C_p}{s+1}u_p(r)^{s+1}-\frac{\lambda}{2}u_p(r)^2\right)^{1/2}\ge \left(\frac{C_p}{2(s+1)}u_p(r)^{s+1}\right)^{1/2},$$
where, in the last inequality, we used the fact that $u_p(r)\to0$, as $r\to\infty$, and $s+1<2$. Integrating both sides of the above inequality, we conclude that $u_p$ has a compact support. In Figure \ref{fig2} we show the case $d=1$ and $f=\chi_{(-1,1)}$.
\end{proof}

\begin{figure}[h]
\begin{center}
\includegraphics[scale=1]{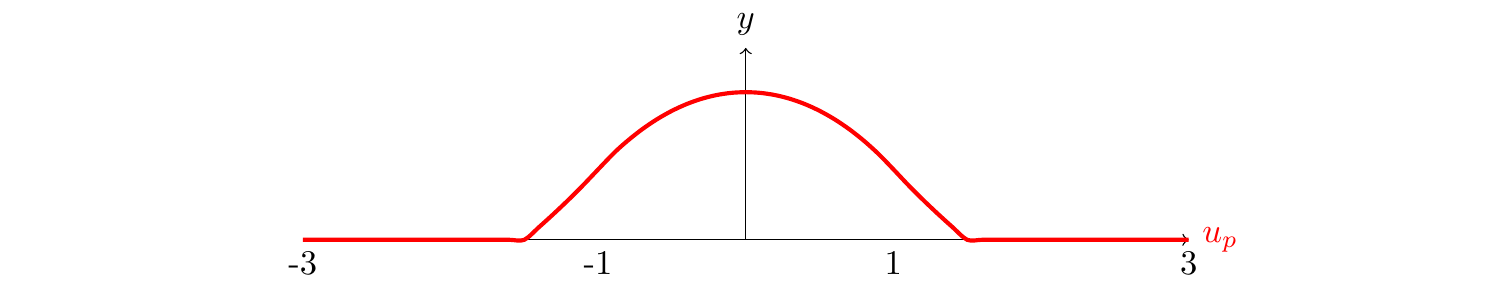}
\caption{The solution $u_p$ of problem \eqref{J-ard}, with $p>1$ and $f=\chi_{(-1,1)}$.}\label{fig2}
\end{center}
\end{figure}

\begin{oss}\label{igns} 
We note that the solution $u_p\in H^1(\R^d)$ of \eqref{J-ard2} is the function for which the best constant $C$ in the interpolated Gagliardo-Nirenberg-Sobolev inequality
\be\label{igns1}
\|u\|_{L^2(\R^d)}\le C\|\nabla u\|_{L^2(\R^d)}^{d/(d+2p)}\|u\|_{L^{2p/(p+1)}(\R^d)}^{2p/(d+2p)}
\ee
is achieved. Indeed, for any $u\in H^1(\R^d)$ and any $t>0$, we define $u_t(x):=t^{d/2}u(tx)$. Thus, we have that $\|u\|_{L^2(\R^d)}=\|u_t\|_{L^2(\R^d)}$, for any $t>0$. Moreover, up to a rescaling, we may assume that the function $g:(0,+\infty)\to\R$, defined by
\begin{align*}
g(t)&=\int_{\R^d}|\nabla u_t|^2\,dx+\left(\int_{\R^d}|u_t|^{2p/(p+1)}\,dx\right)^{(p+1)/p}\\
&=t^2\int_{\R^d}|\nabla u|^2\,dx+t^{-d/p}\left(\int_{\R^d}|u|^{2p/(p+1)}\,dx\right)^{(p+1)/p},
\end{align*}
achieves its minimum in the interval $(0,+\infty)$ and, moreover, we have
$$\min_{t\in(0,+\infty)}g(t)=C\left(\int_{\R^d}|\nabla u|^2\,dx\right)^{d/(d+2p)}\left(\int_{\R^d}|u|^{\frac{2p}{p+1}}\,dx\right)^{2(p+1)/(d+2p)},$$
where $C$ is a constant depending on $p$ and $d$. In the case $u=u_p$, the minimum of $g$ is achieved for $t=1$ and so, we have that $u_p$ is a solution also of
$$\min\Bigg\{\left(\int_{\R^d}|\nabla u|^2\,dx\right)^{d/(d+2p)}\left(\int_{\R^d}|u|^{2p/(p+1)}\,dx\right)^{2(p+1)/(d+2p)} :\ u\in H^1(\R^d),\ \int_{\R^d}u^2\,dx=1\Bigg\},$$
which is just another form of \eqref{igns1}.
\end{oss}

\section{Further remarks and open questions}\label{s6}

We recall (see \cite{bubu12}) that the injection $H^1_V(\R^d)\hookrightarrow L^2(\R^d)$ is compact whenever the potential $V$ satisfies $\int_{\R^d}V^{-p}\,dx<+\infty$ for some $0<p\le1$. In this case the spectrum of the Schr\"odinger operator $-\Delta+V$ is discrete and we denote by $\lambda_k(V)$ its eigenvalues. The existence of an optimal potential for spectral optimization problems of the form 
\be\label{lbrdk}
\min\left\{\lambda_k(V)\ :\ V\ge0,\ \int_{\R^d}V^{-p}\,dx\le1\right\},
\ee
for general $k\in\N$, cannot be deduced by the direct methods used in Subsection \ref{ss52}. In this last section we make the following conjectures:
\begin{enumerate}
[{\bf Conjecture 1)}]
\item For every $k\ge 1$, there is a solution $V_k$ of the problem \eqref{lbrdk}.
\item The set of finiteness $\{V_k<+\infty\}$, of the optimal potential $V_k$, is bounded.
\end{enumerate}

In what follows, we prove an existence result in the case $k=2$. We first recall that, by Proposition \ref{lbrdex}, there exists optimal potential $V_p$, for $\lambda_1$, such that the set of finiteness $\{V_p<+\infty\}$ is a ball. Thus, we have a situation analogous to the Faber-Krahn inequality, which states that the minimum  
\be\label{fk}
\min\left\{\lambda_1(\O)\ :\ \O\subset\R^d,\ |\O|=c\right\},
\ee
is achieved for the ball of measure $c$. We recall that, starting from \eqref{fk}, one may deduce, by a simple argument (see for instance \cite{hen03}), the Krahn-Szeg\"o inequality, which states that the minimum
\be\label{ks}
\min\left\{\lambda_2(\O)\ :\ \O\subset\R^d,\ |\O|=c\right\},
\ee
is achieved for a disjoint union of equal balls. In the case of potentials one can find two optimal potentials for $\lambda_1$ with disjoint sets of finiteness and then apply the argument from the proof of the Krahn-Szeg\"o inequality. In fact, we have the following result.
 
\begin{prop}\label{potks}
There exists an optimal potential, solution of \eqref{lbrdk} with $k=2$. Moreover, any optimal potential is of the form $\min\{V_1,V_2\}$, where $V_1$ and $V_2$ are optimal potentials for $\lambda_1$ which have disjoint sets of finiteness $\{V_1<+\infty\}\cap\{V_2<+\infty\}=\emptyset$ and are such that $\int_{\R^d} V_1^{-p}\,dx=\int_{\R^d} V_2^{-p}\,dx=1/2$. 
\end{prop}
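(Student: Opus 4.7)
The plan is to transport the classical derivation of the Krahn--Szeg\"o inequality from the Faber--Krahn inequality into the potential setting. Let
$$\ell_1:=\min\left\{\lambda_1(V)\ :\ V\ge0,\ \int_{\R^d}V^{-p}\,dx\le1\right\},$$
which, by Proposition~\ref{lbrdex}, is attained by a potential $V_*$ whose set of finiteness is a ball of finite radius. A preliminary step is to analyze the homothety $W_\tau(x):=\tau^2 W(\tau x)$, which satisfies
$$\int_{\R^d} W_\tau^{-p}\,dx=\tau^{-(2p+d)}\int_{\R^d} W^{-p}\,dx,\qquad \lambda_1(W_\tau)=\tau^2\lambda_1(W),$$
and hence yields the scaling identity $\min\{\lambda_1(W):\int W^{-p}\le c\}=c^{-2/(2p+d)}\ell_1$ for every $c>0$.

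I next establish the lower bound $\lambda_2(V)\ge 2^{2/(2p+d)}\ell_1$ for every admissible $V$. Because $\int V^{-p}\le1$ forces the spectrum of $-\Delta+V$ to be discrete, a second normalized eigenfunction $u_2$ exists and, being $L^2$-orthogonal to the (positive) first eigenfunction, changes sign. On the quasi-open nodal sets $\omega_\pm:=\{\pm u_2>0\}$, the parts $u_2^\pm$ are, by restriction of the equation $-\Delta u_2+Vu_2=\lambda_2(V)u_2$, positive first eigenfunctions of the capacitary-measure operators $-\Delta+V_\pm$, where $V_\pm:=V+I_{\omega_\pm^c}$ (with $I_\O$ as in \eqref{Iomega}). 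Therefore $\lambda_1(V_+)=\lambda_1(V_-)=\lambda_2(V)$. Writing $a:=\int_{\omega_+}V^{-p}\,dx$ and $b:=\int_{\omega_-}V^{-p}\,dx$, one has $a+b\le1$, and the scaling identity gives
$$\lambda_2(V)\ge\max\{a^{-2/(2p+d)},b^{-2/(2p+d)}\}\,\ell_1=\min\{a,b\}^{-2/(2p+d)}\ell_1\ge 2^{2/(2p+d)}\ell_1,$$
the last step using $\min\{a,b\}\le(a+b)/2\le1/2$.

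To realize this bound, set $\tau:=2^{1/(2p+d)}$ and $W^*:=(V_*)_\tau$. Then $W^*$ minimizes $\lambda_1$ under the constraint $\int(\cdot)^{-p}\le1/2$, has a ball as set of finiteness, and satisfies $\lambda_1(W^*)=2^{2/(2p+d)}\ell_1$. Choosing two translates $W_1,W_2$ of $W^*$ with disjoint sets of finiteness and putting $V:=\min\{W_1,W_2\}$ (a capacitary measure equal to $+\infty$ outside the union of the two balls), one checks that $\int V^{-p}\,dx=1$ and that $-\Delta+V$ decomposes as a direct sum over the two disjoint components, giving $\lambda_1(V)=\lambda_2(V)=2^{2/(2p+d)}\ell_1$. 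This proves existence. For the structural part, equality back through the inequality chain forces $\min\{a,b\}=1/2$, hence $a=b=1/2$ and $V=+\infty$ q.e.\ outside $\omega_+\cup\omega_-$; the equality case in the scaling identity then forces $V_\pm$ to be optimal potentials for $\lambda_1$ at level $1/2$. Setting $V_1:=V_+$ and $V_2:=V_-$ yields the claimed form.

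The main delicate point is the identification $\lambda_1(V_\pm)=\lambda_2(V)$ when the set of finiteness of $V$ is only quasi-open: one must justify that $u_2^\pm$ belong to the correct variational space $H^1_{V_\pm}(\R^d)$ and that the restriction of the weak eigenvalue equation to each nodal set is legitimate. Both points are handled within the capacitary-measure framework of Section~\ref{s2} using quasi-continuous representatives; the rest is a scaling and Lagrange-multiplier argument that parallels the classical passage from Faber--Krahn to Krahn--Szeg\"o.
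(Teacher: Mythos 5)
Your strategy is essentially the one the paper uses: decompose along the nodal sets of a second eigenfunction, bound $\lambda_2(V)$ from below by the first eigenvalues of the two restricted potentials, and realize the bound by two disjoint translates of a rescaled optimal potential for $\lambda_1$. Your explicit scaling identity $\min\{\lambda_1(W):\int_{\R^d} W^{-p}\,dx\le c\}=c^{-2/(2p+d)}\ell_1$ is a clean quantitative substitute for the paper's appeal to the monotonicity of the optimal value with respect to the constraint, and your equality analysis makes the structural claim (that any optimal potential is of the form $\min\{V_1,V_2\}$) more explicit than the paper's proof does.

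There is, however, one genuine gap: the assertion that $u_2$, ``being $L^2$-orthogonal to the (positive) first eigenfunction, changes sign.'' For an admissible $V$ the set of finiteness $\{V<+\infty\}$ may be disconnected (in the capacity sense), in which case the first eigenfunction $u_1$ is only nonnegative, not positive q.e., and a second eigenfunction can be nonnegative with support essentially disjoint from that of $u_1$; orthogonality then forces no sign change. In that situation your nodal decomposition degenerates ($\omega_-=\emptyset$), and the key estimate $\min\{a,b\}\le 1/2$ is lost. The paper covers this by a second case: when $u_2$ does not change sign, take $V_+=\sup\{V,\infty_{\{u_2=0\}}\}$ and $V_-=\sup\{V,\infty_{\{u_1=0\}}\}$; since $u_1,u_2\ge0$ are orthogonal, their supports are essentially disjoint, so again $\int_{\R^d} V_+^{-p}\,dx+\int_{\R^d} V_-^{-p}\,dx\le1$ and $\lambda_1(V_\pm)\le\lambda_2(V)$, after which your scaling argument applies verbatim. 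With this extra case added your proof is complete. A smaller remark: in the lower-bound chain you only need the inequality $\lambda_1(V_\pm)\le\lambda_2(V)$, obtained by testing the weak eigenvalue equation with $u_2^\pm$ (which lie in $H^1_{V_\pm}$ since $u_2^\pm=0$ q.e.\ on $\{\pm u_2\le0\}$); the stronger identification $\lambda_1(V_\pm)=\lambda_2(V)$ is needed only in the rigidity step, where it does follow from the equality case of the Rayleigh quotient.
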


\begin{proof}
Given $V_1$ and $V_2$ as above, we prove that for every $V:\R^d\to[0,+\infty]$ with $\int_{\R^d} V^{-p}\,dx=1$, we have
$$\lambda_2(\min\{V_1,V_2\})\le \lambda_2(V).$$
Indeed, let $u_2$ be the second eigenfunction of $-\Delta+V$. We first suppose that $u_2$ changes sign on $\R^d$ and consider the functions $V_+=\sup\{V,\infty_{\{u_2\le0\}}\}$ and $V_-=\sup\{V,\infty_{\{u_2\ge0\}}\}$ where, for any measurable $A\subset\R^d$, we set
$$\infty_A(x)=
\begin{cases}
\ds +\infty,\qquad x\in A,\\
0, \qquad x\notin A. 
\end{cases}$$
We note that
$$1=\int_{\R^d}V^{-p}\,dx=\int_{\R^d}V^{-p}_+\,dx+\int_{\R^d}V^{-p}_-\,dx.$$
Moreover, on the sets $\{u_2>0\}$ and $\{u_2<0\}$, the following equations are satisfied:
$$-\Delta u_2^++V_+u_2^+=\lambda_2(V)u_2^+,\qquad -\Delta u_2^-+V_-u_2^-=\lambda_2(V)u_2^-,$$
and so, multiplying respectively by $u_2^+$ and $u_2^-$, we obtain that
\be\label{t61}
\lambda_2(V)\ge\lambda_1(V_+),\qquad \lambda_2(V)\ge \lambda_1(V_-),
\ee
where we have equalities if, and only if, $u_2^+$ and $u_2^-$ are the first eigenfunctions corresponding to $\lambda_1(V_+)$ and $\lambda_1(V_-)$. Let now $\widetilde V_+$ and $\widetilde V_-$ be optimal potentials for $\lambda_1$ corresponding to the constraints
$$\int_{\R^d} \widetilde V_+^{-p}\,dx=\int_{\R^d} V_+^{-p}\,dx,\qquad\int_{\R^d} \widetilde V_-^{-p}\,dx=\int_{\R^d} V_-^{-p}\,dx.$$
By Proposition \ref{lbrdex}, the sets of finiteness of $\widetilde V_+$ and $\widetilde V_-$ are compact, hence we may assume (up to translations) that they are also disjoint. By the monotonicity of $\lambda_1$, we have
$$\max\{\lambda_1(V_1),\lambda_1(V_2)\}\le\max\{\lambda_1(\widetilde V_+),\lambda_1(\widetilde V_-)\},$$
and so we obtain
 $$\lambda_2(\min\{V_1,V_2\})\le\max\{\lambda_1(\widetilde V_+),\lambda_1(\widetilde V_-)\}\le\max\{\lambda_1(V_+),\lambda_1(V_-)\}\le\lambda_2(V),$$
as required. If $u_2$ does not change sign, then we consider $V_+=\sup\{V,\infty_{\{u_2=0\}}\}$ and $V_-=\sup\{V,\infty_{\{u_1=0\}}\}$, where $u_1$ is the first eigenfunction of $-\Delta+V$. Then the claim follows by the same argument as above. 
\end{proof}

For more general cost functionals $F(V)$, the question if the optimization problem
$$\min\left\{F(V)\ :\ V\ge0,\ \int_{\R^d}V^p\,dx\le1\right\}$$
admits a solution is, as far as we know, open.


%
%
%

\end{document}